 \let\mathscr\relax
\newcommand{\Mod}[1]{\ (\mathrm{mod}\ #1)}
\newtheorem{theorem}{Theorem}[section]
\newtheorem*{defi}{Definitions}
\newtheorem{corollary}[theorem]{Corollary}
\newtheorem{lemma}[theorem]{Lemma}
\title{A New Weak Choice Principle}
\date{\vspace{-5ex}}
\author{Lorenz Halbeisen, Riccardo Plati, Salome Schumacher}
\providecommand{\keywords}[1]{\textit{key-words:} #1}
\providecommand{\mathclass}[1]{\textit{2010 Mathematics Subject Classification:} #1}
\affil{Department of Mathematics, ETH Z{\"u}rich}
\begin{document}

\maketitle

\begin{abstract}
\noindent For every natural number $n$ we introduce a new weak choice principle $\mathrm{nRC_{fin}}$:
\vspace{0.4em}\begin{addmargin}[27pt]{27pt}\textit{Given any infinite set $x$, there is an infinite subset $y\subseteq x$ and a selection function $f$ that chooses an $n$-element subset from every finite $z\subseteq y$ containing at least $n$ elements.}
\end{addmargin}\vspace{0.4em}
 By constructing new permutation models built on a set of atoms obtained as Fra\"iss\'e limits, we will study the relation of $\mathrm{nRC_{fin}}$ to the weak choice principles $\mathrm{RC_m}$ (that has already been studied in \cite{lorenz} and \cite{montenegro}): 
\vspace{0.4em} \begin{addmargin}[27pt]{27pt}
 \textit{Given any infinite set $x$, there is an infinite subset $y\subseteq x$ with a choice function $f$ on the family of all $m$-element subsets of $y$.}
\end{addmargin} \vspace{0.4em}
 Moreover, we prove a stronger analogue of the results in \cite{montenegro} when we study the relation between $\mathrm{nRC_{fin}}$ and $\mathrm{kC_{fin}^-}$ which is defined by: 
 \vspace{0.4em}\begin{addmargin}[27pt]{27pt}
 \textit{Given any infinite family $\mathcal{F}$ of finite sets of cardinality greater than $k$, there is an infinite subfamily $\mathcal{A}\subseteq \mathcal{F}$ with a selection function $f$ that chooses a $k$-element subset from each $A\in\mathcal{A}$.}
 \end{addmargin}

\end{abstract}
\keywords{weak forms of the Axiom of Choice, consistency results, Ramsey Choice, Fraenkel-Mostowski permutation models of ZFA+$\neg$AC, Pincus’ transfer theorems, partial $n$-selection for infinite families of finite sets}\\ \\
\mathclass{\textbf{03E25} 03E35}

\section{Notation and Choice Principles}
In this paper we will use the following terminology: 
\begin{itemize}
	\item By $\omega$ we denote the set of all natural numbers $\{0,1,2,\dots\}$ and $\mathrm{fin}(\omega)$ denotes the set of finite subsets of $\omega$.
    \item Given a set $x$ and a natural number $n$, $[x]^n$ is defined as the set of \emph{all\/} the subsets of $x$ with cardinality $n$. Similarly, $[x]^{>n}$ is the set of all the \emph{finite\/} subsets of $x$ with cardinality greater than $n$.
    \item Given a permutation model $\mathcal{M}$ and a statement $\phi$, we will write $\mathcal{M}\models\phi$ to indicate that $\phi$ holds in $\mathcal{M}$.
    \item $\mathrm{BFM}$ is the well known Basic Fraenkel Model.
\end{itemize}

Furthermore, we shall use the following notation for weak choice principles:
\begin{itemize}
    \item $\mathrm{RC_n}$ is the following axiom: given any infinite set $x$, there exists an infinite subset $y\subseteq x$ with a choice function $f\colon[y]^n\to y$ such that, for all $z\in[y]^n$, $f(z)\in z$.
        \item $\mathrm{nRC_{fin}}$ is the following axiom: given any infinite set $x$, there exists an infinite subset $y\subseteq x$ with a selection function $f\colon[y]^{>n}\to[y]^n$ such that, for all $z\in[y]^{>n}$, $f(z)\subseteq z$.
    \item $\mathrm{C_n}$ is the following axiom: any infinite family $\mathcal{A}$ of sets of cardinality $n$ has a choice function $f\colon\mathcal{A}\to\bigcup\mathcal{A}$ such that, for all $A\in\mathcal{A}$, $f(A)\in A$.
    \item $\mathrm{C^-_n}$ is the following axiom: given any infinite family $\mathcal{F}$ of non-empty sets with cardinality $n$, there exists an infinite subfamily $\mathcal{A}\subseteq\mathcal{F}$ which has a choice function $f\colon\mathcal{A}\to\bigcup\mathcal{A}$ such that, for all $A\in\mathcal{A}$, $f(A)\in A$.
    \item $\mathrm{nC_{fin}}$ is the following axiom: any infinite family $\mathcal{A}$ of finite sets with cardinality greater than $n$ has a selection function $f\colon\mathcal{A}\to[\bigcup\mathcal{A}]^n$ such that, for all $A\in\mathcal{A}$, $f(A)\subseteq A$.
    \item $\mathrm{nC^-_{fin}}$ is the following axiom: given any infinite family $\mathcal{F}$ of finite sets with cardinality greater than $n$, there exists an infinite subfamily $\mathcal{A}\subseteq\mathcal{F}$ which has a selection function $f\colon\mathcal{A}\to[\bigcup\mathcal{A}]^n$ such that, for all $A\in\mathcal{A}$, $f(A)\subseteq A$.
    \item $\mathrm{ACF^-}$ is the following axiom: given any infinite family $\mathcal{F}$ of non-empty finite sets, there exists an infinite subfamily $\mathcal{A}\subseteq\mathcal{F}$ which has a choice function $f\colon\mathcal{A}\to\bigcup\mathcal{A}$ such that, for all $A\in\mathcal{A}$, $f(A)\in A$.
\end{itemize}

\section{Introduction}
Following the terminology used in \cite{reduced}, we introduce a new class of diminished choice principles $\mathrm{nRC_{fin}}$ and study its relation with the two classes $\mathrm{RC_n}$ and $\mathrm{nC^-_{fin}}$, which have in general inspired the new one; we will indeed obtain analogous results to \cite{lorenz} and \cite{salome}. The title of each section refers to the following diagram. The labels of the arrows indicate in which section we analyze that specific implication. Here, n,k,m,j stand all for natural numbers.

\begin{center}
\begin{tikzcd}[column sep=7em, row sep=4em]
     \mathrm{nRC_{fin}} 
     \arrow [loop left, lu, "\mathrm{Sec.\,8}"]
     \arrow[r, shift left, "\mathrm{Sec.\,6}"]
     \arrow[d, shift left, "\mathrm{Sec.\,9}"]
     & \mathrm{kC^-_{fin}} \arrow[shift left, l, "\mathrm{Sec.\,5}"]
     \arrow[d, shift left, "\scriptsize{\cite{salome}}"]\\
     \mathrm{RC_m} \arrow[shift left, u, "\mathrm{Sec.\,4}"]
     \arrow[r, shift left, "\scriptsize{\cite{lorenz}}"]
     & \mathrm{C^-_j}  
\end{tikzcd}
\end{center}

\noindent To be more precise, we will prove the following results:
\begin{itemize}
	\item Relation between $\mathrm{nRC_{fin}}$ and $\mathrm{RC_m}$:
	\begin{itemize}
		\item For each $n\in\omega$, $\mathrm{RC_n}\nRightarrow\mathrm{nRC_{fin}}$ in ZF+$\neg$AC.
		\item For all $k,n\in\omega$, $\mathrm{nRC_{fin}}\Rightarrow \mathrm{RC_{kn+1}}$.
		\item $\mathrm{4RC_{fin}}\Rightarrow \mathrm{RC_n}$ whenever $n$ is odd and greater than $4$.
	\end{itemize}
	
	\item Relation between $\mathrm{nRC_{fin}}$ and $\mathrm{kC_{fin}^-}$:
	\begin{itemize}
		\item For each $n\in\omega$, $\mathrm{nC_{fin}^-}\nRightarrow\mathrm{nRC_{fin}}$ in ZF+$\neg$AC.
		\item For all $n\in\{2,3,4,6\}$, $\mathrm{nRC_{fin}}\Rightarrow \mathrm{nC_{fin}^-}$.
		\item For all primes $p$ and all $k\in\omega$ we have that $\mathrm{p^kRC_{fin}}\Rightarrow \mathrm{p^kWOC_{fin}^-}$.
	\end{itemize}
	
	\item A relation between $\mathrm{nRC_{fin}}$ and $\mathrm{C_k^-}$:
	\begin{itemize}
		\item $\mathrm{4RC_{fin}}\Rightarrow \mathrm{C_3^-}$.
	\end{itemize}
	
	\item Relation between $\mathrm{nRC_{fin}}$ and $\mathrm{kRC_{fin}}$:
	\begin{itemize}
		\item For all $k,n\in\omega$, $\mathrm{nRC_{fin}}\Rightarrow \mathrm{knRC_{fin}}$.
		\item Let $k,n\in \omega$ with $k>n$. If $k$ is not a multiple of $n$, then
		$\mathrm{nRC_{fin}}\nRightarrow\mathrm{kRC_{fin}}$ in ZF+$\neg$AC.
	\end{itemize}
\end{itemize}

\section{Approach and Transferability}

We will prove independence between choice principles in $\mathrm{ZF}$ via permutation models. In a few words, we can say that a permutation model is built from a ground model, which is a model of $\mathrm{ZFA}$: a variation of $\mathrm{ZF}$ set theory in which the axiom of extensionality is weakened in order to allow the existence of new objects (called atoms) containing no elements, but which are still distinct from the empty set. From this ground model (which satisfies $\mathrm{AC}$), one can extract a submodel of $\mathrm{ZFA}$ in which $\mathrm{AC}$ fails. For details regarding this construction, see, for example, \cite{libro}. We will simply denote a permutation model by the structure of the set of atoms $A$, the normal ideal $I$ on $A$ and the group of permutations $G$: the normal filter on $G$ will always be the one generated by $I$. Given a permutation model, we will get conclusion regarding $\mathrm{ZF}$ in the following way: Suppose we manage to build a permutation model in which a certain choice principle $\mathrm{Ax1}$ holds and some other $\mathrm{Ax2}$ fails. Using the results of \cite{pincus}, we can conclude that if $\mathrm{Ax1}$ and $\mathrm{Ax2}$ both belong to a certain class of statements (in which case the statements are said to be injectively boundable), then there is a model of $\mathrm{ZF}$ in which $\mathrm{Ax1}$ holds, $\mathrm{Ax2}$ fails, and both, $\mathrm{Ax1}$ and $\mathrm{Ax2}$, have the same $\textit{meaning}$ as in the permutation model, i.e., cardinalities and cofinalities remain unchanged between the two models. For the definition of injectively boundable, see \cite{pincus} or \cite{libro2}. Once done, it is not hard to see that all the choice principles we will consider are injectively boundable:  an injection of $\omega$ in an infinite set gives an infinite subset of which the power set admits a choice function.  

\section{Vertical Upward}

In this section we show that for any positive $m\in\omega$, $(\forall n\in\omega\:\mathrm{RC}_n)$ does not imply $m\mathrm{RC}_\mathrm{fin}$. To this end, we use the model which in \cite{salome} is called $\mathcal{V}_\mathrm{fin}$. The results contained in this section are not new and can be found stated in \cite{libro2} and proved in \cite{levy}.

The model $\mathcal{V}_\mathrm{fin}$ is constructed from a countable set of atoms $A$ partitioned in a well ordered family of blocks $\{B_i:i\in\omega\}$, such that for every $i\in\omega$, $B_i$ has cardinality $p_i$, where $p_i$ is the $i$-th prime number. For each $i\in\omega$, fix a cyclic permutation $\varphi_i$ on $B_i$ that has no fixed points. The considered group of permutations $G$ is given by all the permutations $\varphi$ on $A$ that move only finitely many atoms and such that, for every $i\in\omega$, $\varphi$ restricted to $B_i$ equals some power of $\varphi_i$. The corresponding normal filter is generated by the normal ideal of all finite subsets of $A$.

\begin{theorem}
We have that $\mathcal{V}_\mathrm{fin}\models\forall n\in\omega\:(\mathrm{C_n}\land\lnot\mathrm{nRC_{fin}})$.
\end{theorem}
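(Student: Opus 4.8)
The plan rests on two structural features of $\mathcal{V}_\mathrm{fin}$. Since distinct blocks are disjoint and on each $B_i$ the admissible permutations are exactly the powers of the single $p_i$-cycle $\varphi_i$, the group $G$ is abelian and isomorphic to $\bigoplus_{i\in\omega}\mathbb{Z}/p_i\mathbb{Z}$. Because $\varphi_i$ has no fixed points, freezing even one atom of $B_i$ freezes all of $B_i$; hence for a finite support $E$ the stabilizer $\mathrm{fix}(E)$ depends only on the finitely many blocks that $E$ meets, and $\mathrm{fix}(E)=\bigoplus_{i:\,B_i\cap E=\emptyset}\mathbb{Z}/p_i\mathbb{Z}$. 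Two consequences will be used repeatedly: the only $\langle\varphi_i\rangle$-invariant subsets of $B_i$ are $\emptyset$ and $B_i$ (as $p_i$ is prime); and whenever a subgroup $H\le\mathrm{fix}(E)$ fixes a finite set $A$ setwise, its image in $\mathrm{Sym}(A)$ is a quotient of a sub-sum of $\bigoplus\mathbb{Z}/p_i\mathbb{Z}$, hence of squarefree order $\prod_{i\in T}p_i$, so every $H$-orbit inside $A$ has size a squarefree product of some of the $p_i$.

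For $\mathcal{V}_\mathrm{fin}\models\lnot\mathrm{nRC_{fin}}$ I would take the witness $x=A$ and argue by contradiction. Suppose $y\subseteq A$ is infinite and $f\colon[y]^{>n}\to[y]^n$ is a selection with $f(z)\subseteq z$, both with a common finite support $E$. Each $y\cap B_i$ with $B_i\cap E=\emptyset$ is $\langle\varphi_i\rangle$-invariant and so equals $\emptyset$ or $B_i$; since $E$ meets only finitely many blocks while $y$ is infinite, infinitely many full blocks $B_i$ disjoint from $E$ lie inside $y$. Pick such a block with $p_i>n$ and put $z=B_i\in[y]^{>n}$. As $B_i\cap E=\emptyset$ we have $\varphi_i\in\mathrm{fix}(E)$, so $\varphi_i f=f$ together with $\varphi_i(B_i)=B_i$ gives $\varphi_i(f(B_i))=f(\varphi_i(B_i))=f(B_i)$. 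Thus $f(B_i)$ is a $\langle\varphi_i\rangle$-invariant subset of $B_i$ of size $n$ with $0<n<p_i$, contradicting that the only invariant subsets are $\emptyset$ and $B_i$.

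For $\mathcal{V}_\mathrm{fin}\models\mathrm{C_n}$, let $\mathcal{A}$ be an infinite family of $n$-element sets with support $E$, and set $E^{*}=E\cup\bigcup_{p_i\le n}B_i$, which is still finite. For each $A\in\mathcal{A}$ the subgroup $\mathrm{Stab}(A)\cap\mathrm{fix}(E^{*})$ fixes $A$ setwise and acts on its $n$ elements through a finite abelian group of order a product of primes $p_i>n$; since every orbit has at most $n$ elements, all orbits are singletons, so this subgroup fixes each element of $A$. I would then define $f$ by choosing, with the ground model's choice function, one representative $A_0$ from each $\mathrm{fix}(E^{*})$-orbit of $\mathcal{A}$ and one element $f(A_0)\in A_0$, and extending equivariantly via $f(\pi A_0)=\pi(f(A_0))$. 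This is well defined because whenever $\pi A_0=\pi' A_0$ the element $\pi^{-1}\pi'$ lies in $\mathrm{Stab}(A_0)\cap\mathrm{fix}(E^{*})$ and therefore fixes $f(A_0)$; and $f$ is $\mathrm{fix}(E^{*})$-equivariant by construction, so $E^{*}$ is a support of $f$ and $f\in\mathcal{V}_\mathrm{fin}$. Since all steps are uniform in the parameter $n$, the single sentence $\forall n\,(\mathrm{C_n}\land\lnot\mathrm{nRC_{fin}})$ holds.

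The failure of $\mathrm{nRC_{fin}}$ is the routine half, following at once from the rigidity of the prime cycles. The substantive point is the affirmative half: recognizing that, because $\mathrm{fix}(E)$ is a direct sum of cyclic groups of distinct prime orders, every orbit on a finite set has squarefree size, so its prime factors are bounded by the cardinality of that set. This is what lets the finitely many blocks with $p_i\le n$ absorb all nontrivial action on $n$-element sets; verifying that the enlargement $E^{*}$ stays finite and that the equivariant extension is genuinely well defined (so that $f$ acquires a finite support) is where the care is needed.
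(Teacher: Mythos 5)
Your proof is correct, and it supplies essentially the classical argument: the paper itself gives no proof of this theorem, deferring to L\'evy \cite{levy} (with the statement recorded in \cite{libro2}), and your two halves — the $\varphi_i$-invariance of $f(B_i)$ on a full block $B_i\subseteq y$ with $p_i>n$ for the failure of $\mathrm{nRC_{fin}}$, and the enlargement $E^{*}=E\cup\bigcup_{p_i\le n}B_i$ so that setwise stabilizers in $\mathrm{fix}(E^{*})$ act on $n$-element sets with orbit sizes that are squarefree products of primes $>n$, hence trivially, for $\mathrm{C_n}$ — are exactly the standard L\'evy-style argument for this model. The only (cosmetic) caveat is that the quantifier must be read over positive $n$, since $\mathrm{0RC_{fin}}$ is vacuously true, which is clearly the paper's intent.
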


Since evidently $\mathrm{C_n}$ implies $\mathrm{RC_n}$, the theorem proves what we claimed in this section.

\section{Horizontal Left}

In this section we briefly mention that any conjunction of $\mathrm{mC^-_{fin}}$ does not imply any $\mathrm{nRC_{fin}}$.

\begin{theorem}
We have that $\mathrm{BFM}\models\forall n\in\omega\:(\mathrm{nC^-_{fin}}\land\lnot\mathrm{nRC_{fin}})$.
\end{theorem}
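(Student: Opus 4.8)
\noindent\emph{Proof idea.} Recall that $\mathrm{BFM}$ is built on a countably infinite set of atoms $\mathbb{A}$ with full symmetry group $\mathrm{Sym}(\mathbb{A})$, the supports being the finite subsets of $\mathbb{A}$; thus every object has a least finite support and is fixed by the pointwise stabiliser $\mathrm{fix}(E)$ of some finite $E\subseteq\mathbb{A}$, and any $\pi\in\mathrm{fix}(E)$ maps an atom into $E$ only if it already lay in $E$. Fix $n\geq 1$.

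I would first establish $\lnot\mathrm{nRC_{fin}}$ by taking $x=\mathbb{A}$. Suppose $y\subseteq\mathbb{A}$ is infinite and $f\colon[y]^{>n}\to[y]^n$ is a selection function, both with supports inside a common finite $E$. Since $y\setminus E$ is infinite, choose distinct $a_0,\dots,a_n\in y\setminus E$ and put $z=\{a_0,\dots,a_n\}\in[y]^{n+1}$; then $f(z)=z\setminus\{a_j\}$ for a unique $j$. The transposition $\pi$ interchanging $a_j$ and some $a_k$ with $k\neq j$, and fixing every other atom, lies in $\mathrm{fix}(E)$ and hence fixes $y$, $f$ and $z$. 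Therefore $z\setminus\{a_j\}=f(z)=f(\pi z)=\pi f(z)=z\setminus\{a_k\}$, forcing $a_j=a_k$, a contradiction. So $\mathrm{BFM}\models\lnot\mathrm{nRC_{fin}}$.

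The substance of the theorem is $\mathrm{nC^-_{fin}}$. Let $\mathcal{F}$ be an infinite $E$-supported family of finite sets of size $>n$, so that $\mathcal{F}$ is a union of $\mathrm{fix}(E)$-orbits. A least-support argument shows that any finite $\mathrm{fix}(E)$-orbit is a singleton $\{A\}$ with $A$ hereditarily $E$-supported, and such an $A$ carries a canonical linear order definable from a fixed well-ordering of $E$; hence if $\mathcal{F}$ has no infinite orbit I select the $n$ least elements of each of its (infinitely many) members. Otherwise $\mathcal{F}$ contains an infinite orbit $O=\mathrm{fix}(E)\cdot A_0$, which I take as the subfamily $\mathcal{A}$. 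The set $A_0^{E}$ of $E$-supported elements of $A_0$ is fixed by $\mathrm{fix}(E)$ and is therefore contained in every member of $O$. If $|A_0^{E}|\geq n$, I pick any $W\in[A_0^{E}]^{n}$ and let $f$ be constant with value $W$: as the elements of $W$ are $E$-supported, this $f$ is $E$-supported and legitimate. If $|A_0^{E}|=q<n$, I enlarge the support, pinning $n-q$ free elements of $A_0$ of minimal support by adjoining their supports to $E$ to form $E'$, I pass to the corresponding $\mathrm{fix}(E')$-orbit, and take $f$ constant equal to $A_0^{E}$ together with those pinned elements.

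The step I expect to be the main obstacle is precisely this pinning in the case $q<n$: the enlargement $E'$ must fix $n-q$ free elements as a common subset, yet be small enough that the $\mathrm{fix}(E')$-orbit of $A_0$ remains infinite, so that $\mathcal{A}$ does not collapse to a single set. Choosing elements of minimal support and exploiting that each member has strictly more than $n$ elements leaves room for a genuinely free element to survive; but when the free elements ``move together'' no common $n$-set can be pinned at all, and there one must instead define $f(A)$ non-constantly, selecting the $E'$-supported elements of $A$ together with entire $\mathrm{Stab}_{\mathrm{fix}(E')}(A)$-orbits of free elements summing to $n$, after pinning just enough atoms to render the relevant orbit sizes uniform across the family. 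Once the atoms-only instance above is settled this bookkeeping is routine, and it yields $\mathrm{BFM}\models\mathrm{nC^-_{fin}}$ for every $n$, completing the proof.
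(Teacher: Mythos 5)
Your negative half is correct, and it matches the paper, which simply asserts that $\mathrm{nRC_{fin}}$ ``fails on the set of atoms''; your transposition argument is the standard way to fill that assertion in, and it is sound: $\pi$ fixes $y$, $f$ and $z$ setwise, so $f(z)$ would have to be invariant under swapping $a_j$ with $a_k$, which is impossible for an $n$-element subset of an $(n+1)$-element set.

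The positive half, however, has a genuine gap, and it sits exactly where you flagged it. Two smaller issues first: a finite $\mathrm{fix}(E)$-orbit is indeed a singleton $\{A\}$ with every \emph{element} of $A$ again $E$-supported (each element's orbit lies inside the finite set $A$), but such an $A$ need not be \emph{hereditarily} $E$-supported ($\mathbb{A}\setminus E$ is $E$-supported while its members are not), and a well-ordering of $E$ does not yield a canonical linear order on arbitrary $E$-supported sets --- there is no way to order, say, $\mathbb{A}$ against $[\mathbb{A}]^2$ ``from $E$''. This step is repairable: make the $n$ choices from each singleton-orbit member in the ground model by true choice; since $\mathrm{fix}(E)$ fixes each such $A$ and all its elements pointwise, the resulting $f$ is $E$-supported however the choices were made. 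The irreparable part, as sketched, is the infinite-orbit case with $q<n$. If $A_0$ consists of $n+1$ atoms outside $E$, then $A_0^E=\emptyset$ and the stabiliser of $A_0$ in $\mathrm{fix}(E)$ induces the full symmetric group on $A_0$, so the only stabiliser-invariant subsets are $\emptyset$ and $A_0$ and no union of stabiliser orbits has size $n$; everything then rests on the pinning, and ``pinning just enough atoms while keeping the orbit infinite'' is not routine when the elements of $A_0$ are arbitrary sets with overlapping least supports: pinning the supports of $n-q$ elements can collapse the $\mathrm{fix}(E')$-orbit to $\{A_0\}$ (e.g.\ $A_0=\{a,\{a\},\{\{a\}\}\}$ with $n-q=2$), and your sketch gives no procedure that provably avoids both failure modes at once. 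The paper avoids this entirely: it cites that $\mathrm{ACF^-}$ holds in $\mathrm{BFM}$ (Howard--Rubin) and observes that $n$ consecutive applications of $\mathrm{ACF^-}$ yield $\mathrm{nC^-_{fin}}$ --- choose \emph{one} element from each member of an infinite subfamily, delete the chosen points (after the disjointification trick of Lemma 6.1), and repeat $n$ times, which is legitimate since every member has more than $n$ elements. That sequential scheme is precisely the disentangled version of your pinning: each round pins a single support, where the orbit dichotomy (singleton or infinite) settles matters immediately, and the entanglement you ran into never arises. If you want a self-contained proof, prove $\mathrm{ACF^-}$ in $\mathrm{BFM}$ by your own orbit method --- it is the $k=1$ instance, where one pinned point replaces the problematic $n$-element invariant set --- and then iterate.
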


\begin{proof}
It is known (see, e.g., \cite{libro2}) that $\mathrm{ACF^-}$ holds in $\mathrm{BFM}$, and it is easy to see that by $n$ consecutive applications of $\mathrm{ACF^-}$ one obtains $\mathrm{nC^-_{fin}}$. The conclusion follows from noticing that any $\mathrm{nRC_{fin}}$ fails on the set of atoms.
\end{proof}

\section{Horizontal Right}

\subsection{Positive}

This subsection starts with the very few cases in which we have a full positive answers.

\begin{lemma}
For $n\in\{2,3\}$ we have $\mathrm{nRC_{fin}}\Rightarrow\mathrm{nC_{fin}^-}$.
\end{lemma}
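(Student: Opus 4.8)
The plan is to recast $\mathrm{nC^-_{fin}}$ as a partial choice statement and then feed a well-chosen auxiliary set into $\mathrm{nRC_{fin}}$. Choosing an $n$-element subset of a set $A$ with $|A|>n$ is the same as choosing a single element of $[A]^n$, and $|[A]^n|=\binom{|A|}{n}\ge n+1$. So, given an infinite family $\mathcal F$ of finite sets of size $>n$, I would \emph{not} apply $\mathrm{nRC_{fin}}$ to $\bigcup\mathcal F$: an infinite $y\subseteq\bigcup\mathcal F$ may be a transversal meeting each member in a single point, and an $n$-selection on subsets of $y$ can never reach the $n-1$ missing points of such a member. Instead I pass to the family $\{[A]^n:A\in\mathcal F\}$, tag its members to make them pairwise disjoint (work with $\{(A,P):A\in\mathcal F,\ P\in[A]^n\}$, which needs no choice), and set $x=\bigsqcup_{A\in\mathcal F}[A]^n$. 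Applying $\mathrm{nRC_{fin}}$ to $x$ produces an infinite $y\subseteq x$ and a selection function $f\colon[y]^{>n}\to[y]^n$. For each member $A$ met by $y$ write $S_A=(\{A\}\times[A]^n)\cap y$, a nonempty finite set of tagged $n$-subsets of $A$; a single distinguished element of some $S_A$ is exactly a distinguished $n$-subset of $A$, so it suffices to make a canonical choice inside $S_A$ for infinitely many $A$.

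The engine is the following elementary use of $f$: on a finite $F\subseteq y$ one may discard the selected set $f(F)$ and recurse on $F\setminus f(F)$, so $f$ peels off $n$ points at a time as long as more than $n$ remain. Starting from $S_A$ this terminates at a canonical subset of size $r$, where $r\in\{1,\dots,n\}$ and $r\equiv|S_A|\pmod n$. In particular, whenever $|S_A|\equiv 1\pmod n$ the peeling ends at a single element, giving a distinguished $n$-subset of $A$; equivalently $f$ induces a genuine choice function on every $[y]^{m}$ with $m\equiv 1\pmod n$. Hence if infinitely many members $A$ have $|S_A|\equiv 1\pmod n$ we are done, and I would dispose of this case first.

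The remaining case, where cofinitely many blocks $S_A$ peel down to $r\in\{2,\dots,n\}$ candidate $n$-subsets of $A$, is where the real work and the restriction $n\in\{2,3\}$ live, and I expect it to be the main obstacle. For $n=2$ this leaves two distinguished $2$-subsets of $A$, and for $n=3$ two or three distinguished $3$-subsets; the task is to produce from them a single canonical $2$- (resp. $3$-) subset of $A$. The point is that these candidates are not an arbitrary family of finite sets (for which no such choice is possible, since $\mathrm{nRC_{fin}}$ does not imply $\mathrm{ACF^-}$ in general), but $n$-subsets of one common $A$, so I would exploit their mutual intersections: for $n=2$, two overlapping $2$-subsets have a canonical symmetric difference, and I would combine such local incidence data with the global choice functions on the $[y]^{m}$, $m\equiv1\pmod n$, obtained above, splitting into the finitely many overlap patterns available when $n\le 3$. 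It is precisely the fact that a bounded number of $n$-subsets of a common set admits only finitely many, analyzable configurations for small $n$ that I would rely on to make the combining step close; for larger $n$ the configurations proliferate and the implication is expected to fail, consistent with the paper later isolating $n\in\{2,3,4,6\}$ and proving negative results otherwise.
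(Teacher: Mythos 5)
Your first phase (peeling off $f$-selected blocks, and the observation that $f$ induces a choice function on every $[y]^m$ with $m\equiv 1\Mod{n}$) is sound and parallels what the paper proves as Lemma 9.1. But the step you yourself flag as ``the main obstacle'' is a genuine gap, and it cannot be closed by the means you propose. Concretely, for $n=2$ the peeling can leave a residual block $\{(A,P_1),(A,P_2)\}$ with $P_1\cap P_2=\emptyset$: this configuration is invariant under any bijection of $A$ swapping $P_1$ and $P_2$, and the only swap-invariant subsets of $P_1\cup P_2$ are $\emptyset$ and $P_1\cup P_2$ itself, so no canonical $2$-subset is definable from the local data. Your function $f$ is of no help here, since it acts only on finite subsets of $y$, i.e.\ on sets of \emph{tagged} $n$-subsets, and the other elements of $\{A\}\times[P_1\cup P_2]^2$ need not lie in $y$; the choice functions on $[y]^m$ with $m\equiv 1\Mod{n}$ do not apply either, since the residual block has size $r\not\equiv 1\Mod{n}$. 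The same symmetry obstruction kills the $n=3$ analysis: for two $3$-subsets of $A$ with $|P_1\cap P_2|\in\{0,1,2\}$, the swap-invariant subsets of $P_1\cup P_2$ have sizes $0,2,4,6$ (or $0,1,4,5$, or $0,2,2,4$ depending on the overlap), never $3$. So ``exploiting mutual intersections'' and ``finitely many overlap patterns'' cannot produce the required canonical set in exactly the symmetric cases, and your proposal supplies no global mechanism to break these ties.

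The paper's proof avoids this dead end by two moves absent from your proposal. First, it does apply $\mathrm{nRC_{fin}}$ to $\bigcup\mathcal{A}$ after all: the transversal obstruction you raise ($|A_i\cap y|=1$ for almost all $i$) is real, but it is overcome by \emph{re-applying} the principle to $\bigcup_{i\in I}A_i\setminus y$ to get a second set $z$ disjoint from $y$; if $|A_i\cap y|=|A_i\cap z|=1$ for infinitely many $i$, then $A_i\cap(y\cup z)$ \emph{is} the desired $2$-set --- the union itself is the answer, so no symmetry ever needs breaking. (In your tagged setting this move has no analogue: unioning one candidate from $y$ with one from $z$ yields two $n$-subsets of $A$, not one.) Second, for $n=3$ the deadlock $|A_i\cap y|=2$ is broken not by incidence analysis but by an external input: $\mathrm{3RC_{fin}}\Rightarrow\mathrm{RC_4}$ together with Montenegro's theorem $\mathrm{RC_4}\Rightarrow\mathrm{C_4^-}\Rightarrow\mathrm{C_2^-}$, which chooses a point from infinitely many of the $2$-element traces $A_i\cap y$ and reduces to the already-solved case. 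Without a counterpart of either the repeated-application trick or the Montenegro input, your argument stops exactly where the content of the lemma begins.
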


\begin{proof}
We prove each case separately. Let $n=2$ and $\mathcal{A}=\{A_j:j\in J\}$ be an infinite family of pairwise disjoint finite sets (we can assume they are disjoint by replacing each $A_i$ with the unique function $A_i^*:A_i\to\{A_i\}$). Set $x=\bigcup\mathcal{A}$ and apply $\mathrm{2RC_{fin}}$ to get an infinite $y\subseteq x$ and a function $g:[y]^{>2}\to[y]^2$ such that, for all $Y\in[y]^{>2}$, $g(Y)\subseteq Y$. Since every element of $\mathcal{A}$ is finite, there must be an infinite subset $I$ of $J$ such that, for all $i\in I$, $A_i\cap y\neq\emptyset$. If for infinitely many $i\in I$, $|A_i\cap y|=2$ the claim is obvious, and likewise if $i\in I$, $|A_i\cap y|>2$ for infinitely many $i\in I$, then we are done by defining for each and every such $i\in I$ the function $f:A_i\mapsto g(A_i\cap y)$. If that is not the case, apply a second time $\mathrm{2RC_{fin}}$ to $\bigcup\{A_i:i\in I\}\setminus y$, to get another infinite subset $z\subseteq x$ with $z\cap y=\emptyset$. If, again, $\{i\in I: |A_i\cap z|\geq2\}$ is finite, then we get that $K=\{i\in I: |A_i\cap y|=|A_i\cap z|=1\}$ is infinite, together with the obvious function $f:A_k\mapsto A_k\cap(y\cup z)$, for all $k\in K$.

For the other case we start similarly: let $n=3$ and $\mathcal{A}=\{A_j:j\in J\}$ be an infinite family of pairwise disjoint finite sets. Set $x=\bigcup\mathcal{A}$ and apply $\mathrm{3RC_{fin}}$ to get an infinite $y\subseteq x$ and a function $g:[y]^{>3}\to[y]^3$ such that, for all $Y\in[y]^{>3}$, $g(Y)\subseteq Y$. Since every element of $\mathcal{A}$ is finite, there must be an infinite subset $I$ of $J$ such that, for all $i\in I$, $A_i\cap y\neq\emptyset$. If $\{i\in I: |A_i\cap y|=1\:\mathrm{or}\:|A_i\cap y|\geq 3\}$ is infinite, with a perfectly analogous approach to the previous case we get the conclusion. Otherwise, for all but finitely many $i\in I$, $|A_i\cap y|=2$. At this point we use Montenegro's result that $\mathrm{RC_4}$ implies $\mathrm{C_4^-}$, which in turns implies $\mathrm{C_2^-}$. Since, by Lemma [9.1], $\mathrm{3RC_{fin}}$ implies $\mathrm{RC_4}$, by applying $\mathrm{C_2^-}$ to $\{A_i: |A_i\cap y|=2\}$ we get to a case which has already been solved, namely the one in which $\{i\in I: |A_i\cap y|=1\}$ is infinite.
\end{proof}

In the proofs of the following two theorems we use the ideas Montenegro needed in \cite{montenegro} to show the implication $\mathrm{RC_4}\implies\mathrm{C^-_4}$.
%

\begin{theorem}
$\mathrm{4RC_{fin}}\Rightarrow\mathrm{4C_{fin}^-}$.
\end{theorem}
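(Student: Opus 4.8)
The plan is to mirror the two cases $n\in\{2,3\}$ of the preceding lemma, pushing the same scheme through the richer case analysis that selecting a $4$-element subset forces. First I would reduce to a pairwise disjoint family: replacing each $A\in\mathcal F$ by $\{A\}\times A$ makes the sets disjoint without changing any cardinality, and a selected $4$-subset downstairs projects back to a $4$-subset of $A$. So assume $\mathcal A=\{A_i:i\in J\}$ is disjoint with every $|A_i|\ge 5$, put $x=\bigcup\mathcal A$, and apply $\mathrm{4RC_{fin}}$ to obtain an infinite $y\subseteq x$ and a selector $g\colon[y]^{>4}\to[y]^4$ with $g(Y)\subseteq Y$. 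Since $y$ is infinite and each $A_i$ is finite, infinitely many $A_i$ meet $y$; pigeonholing the value $|A_i\cap y|$ I either find infinitely many $i$ with $|A_i\cap y|>4$ — and then $f(A_i)=g(A_i\cap y)$ works — or, after discarding finitely many indices, I may pass to an infinite subfamily on which $|A_i\cap y|=c$ is constant with $c\in\{1,2,3,4\}$. The value $c=4$ is immediate via $f(A_i)=A_i\cap y$.

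The remaining cases $c\in\{1,2,3\}$ I would handle by iterating $\mathrm{4RC_{fin}}$ on the complements. Having peeled off layers $y_0=y,y_1,\dots,y_{r-1}$, the sets $B_i=A_i\setminus(y_0\cup\dots\cup y_{r-1})$ still satisfy $|B_i|\ge 2$ as long as the number of already-selected elements is $<4$, so $\bigcup_i B_i$ is infinite and a fresh application of $\mathrm{4RC_{fin}}$ yields an infinite $y_r$ (disjoint from the earlier layers) together with a selector $g_r$. Again I pigeonhole $|A_i\cap y_r|=c_r$; if some $c_r\ge 5$ (use $g_r$) or $c_r=4$ (take $A_i\cap y_r$) I am done on that subfamily, so I may assume every $c_r\in\{1,2,3\}$. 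I then accumulate the canonical pieces $T_i^0,T_i^1,\dots$ with $|T_i^j|=c_j$ and stop at the first round $r$ at which the running total $s=c_0+\dots+c_r$ reaches $4$ or more. Because each $c_j\ge 1$ this happens within four rounds, and because the last increment is at most $3$ we always have $s\in\{4,5,6\}$. If $s=4$ I simply set $f(A_i)=T_i^0\cup\dots\cup T_i^r$, a canonical $4$-subset.

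The genuine content is the overshoot $s\in\{5,6\}$. A short inspection of how the running total first crosses $4$ shows that the accumulated set then splits into two canonically determined blocks whose sizes are among $(2,3),(3,2),(3,3)$, and in each pattern there is a distinguished $3$-element block $R_i\subseteq A_i$ such that discarding a single element of $R_i$ (for $s=5$) or keeping a single element of $R_i$ while taking the complementary $3$-block whole (for $s=6$) produces exactly a $4$-set. Thus everything reduces to selecting one element from each member of an infinite family of $3$-element sets, i.e.\ to $\mathrm{C^-_3}$. This is exactly the point at which I would import Montenegro's idea from \cite{montenegro}: the same combinatorial argument that yields $\mathrm{RC_4}\Rightarrow\mathrm{C^-_4}$ gives $\mathrm{4RC_{fin}}\Rightarrow\mathrm{C^-_3}$, applied here to the family $\{R_i\}$; after passing to the resulting infinite subfamily the selection $f$ is defined throughout.

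The step I expect to be the main obstacle is precisely this selection from the distinguished $3$-sets. The naive attempt — apply $\mathrm{4RC_{fin}}$ once more to $\bigcup_i R_i$ and read off the intersections — succeeds whenever some $R_i\cap y^*$ has size $1$ or $2$ (a singleton inside, respectively a unique element outside), but it stalls on the subfamily with $R_i\subseteq y^*$, since a $3$-set carries no $4$-subset for the selector to act on. Overcoming this stall is the heart of Montenegro's technique and the reason the statement is announced as using his ideas; the rest of the proof is the bookkeeping of the case analysis above, together with the routine observation that every step passes to an infinite subfamily, so infiniteness of $\mathcal A$ is preserved throughout.
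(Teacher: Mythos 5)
Your reduction is correct, and it is in substance the same as the paper's: the paper likewise disposes of all the easy intersection patterns in one sentence and isolates $|A_i\cap y|=3$ as the only difficult case, stating explicitly that what remains is to show $\mathrm{4RC_{fin}}\Rightarrow\mathrm{C_3^-}$. Your layered bookkeeping (iterating $\mathrm{4RC_{fin}}$ on complements, stopping when the running total first reaches $4$, and splitting the overshoot cases $s\in\{5,6\}$ into block patterns $(2,3),(3,2),(3,3)$) is a slightly more systematic way of organizing what the paper calls ``perfectly analogous arguments,'' and up to that point nothing is missing.

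The genuine gap is that you never prove $\mathrm{4RC_{fin}}\Rightarrow\mathrm{C_3^-}$, and citing \cite{montenegro} does not close it. Montenegro proves $\mathrm{RC_4}\Rightarrow\mathrm{C_4^-}$: there the hypothesis is a choice function on $[y]^4$ and the targets are $4$-element sets, whereas here the hypothesis is a selector of $4$-subsets of sets of size greater than $4$ and the targets are $3$-element sets\,---\,and, as you yourself observe, the selector literally cannot act on a $3$-set, so ``the same combinatorial argument'' is not the same argument; adapting it is the entire content of the paper's proof. Concretely, the paper sets $B_i=A_i\cap y$, forms the $6$-element unions $B_i\cup B_j$, and defines a directed graph on the index set $I$ by letting $(i,j)$ be an edge iff $B_j\nsubseteq g(B_i\cup B_j)$; since $g$ returns only $4$ of the $6$ elements, every pair $\{i,j\}$ carries at least one edge, and a counting argument (a set of $2k'+3$ vertices each of outdegree $k'$ would force $\binom{2k'+3}{2}\le(2k'+3)k'$, which is false) shows that each outdegree class $I_k$ is finite, yielding a well-ordered partition of $I$ into finite classes. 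One then applies $\mathrm{4RC_{fin}}$ a second time, to the index set $I$ itself, extracting exactly $m$ indices ($1\le m\le 4$) from infinitely many classes, and finishes by a four-way case analysis on $m$: for $m=1$ use edges between consecutive classes, for $m=2,3$ use edges inside each extracted set, and for $m=4$ apply $g$ once more to $\bigcup_{i\in f(I_k)}B_i$. None of this is obtainable by quotation; as written, your argument establishes only the easy reduction, while the step you flag as ``the main obstacle'' is exactly the theorem.
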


\begin{proof}
Let $\mathcal{A}=\{A_j:j\in J\}$ be an infinite family of pairwise disjoint finite sets. Set $x=\bigcup\mathcal{A}$ and apply $\mathrm{4RC_{fin}}$ to get an infinite $y\subseteq x$ and a function $g\colon[y]^{>4}\to[y]^4$ such that, for all $Y\in[y]^4$, $g(Y)\subseteq Y$. Since every element of $\mathcal{A}$ is finite, there must be an infinite subset $I$ of $J$ such that, for all $i\in I$, $A_i\cap y\neq\emptyset$. With perfectly analogous arguments as in the previous lemma, it is easy to see that the only difficult case is when, for all $i\in I$, $|A_i\cap y|=3$. The following part of the proof shows that $\mathrm{4RC_{fin}}$ implies $\mathrm{C_3^-}$.

For all $i\in I$, set $B_i=A_i\cap y$. We define a directed graph $G\subseteq I^2$ on $I$: let $(i,j)$ be an edge if and only if $B_j\nsubseteq g(B_i\cup B_j)$. The idea behind this definition is that every time $(i,j)$ is an edge, then the function $g$ selects one element from $B_j$ whenever considered together with $B_i$ (we choose the element in $B_i\setminus g(B_i\cup B_j)$ if $\lvert B_i\cup g(B_i\cup B_j)\rvert=2$). We say that an $i\in I$ has outdegree $k$ whenever $|\{j\in I: (i,j)\in G\}|=k$. Notice that we can assume that no $i\in I$ has infinite outdegree, otherwise we could easily select one element from infinitely many $B_i$. Now we claim that for each $k\in\omega$ there are only finitely many $i\in I$ such that $i$ has outdegree $k$. To prove this, assume towards a contradiction that there exists some $k'\in\omega$ and $\widetilde{I}\subseteq I$ such that $|\widetilde{I}|=2k'+3$ and that for all $i\in\widetilde{I}$, $i$ has outdegree $k'$. By construction, if $n$ is the number of edges contained in $\widetilde{I}^2$, then $\binom{|\widetilde{I}|}{2}\leq n\leq|\widetilde{I}|k'$, from which follows $k'+1\leq k'$, a contradiction. We have obtained a well ordered partition of $I$ into finite classes according to the outdegree of every $i\in I$. In symbols: 
$$
I_k=\{i\in I: i\textrm{ has outdegree }k\}\text{ for every }k\in\omega.
$$ 
Applying $\mathrm{4RC_{fin}}$ to $I$, we extract at most $4$ elements from each class, and for some $1\leq m\leq 4$, we get exactly $m$ elements from infinitely many classes, so we can assume that we get $m$ elements from every class. Write $f(I_k)$ for the $m$ extracted elements from $I_k$. We finish the proof by analyzing each of these cases separately.

If $m=1$, then, for all $k\in\omega$, there is at least one edge between the element of $f(I_{2k})$ and the one of $f(I_{2k+1})$, and this allows us to select one element from $B_{2k}$ or $B_{2k+1}$.

If $m=2$, we just consider, for all $k\in\omega$, the edges (there must be at least one) between the two elements of $f(I_k)$, and conclude the proof as in the previous case.

If $m=3$, consider again, for all $k\in\omega$, all the inner edges contained in $f(I_k)^2$. Since each $i\in f(I_k)$ can be chosen at most $2$ times and there are at least $3$ inner edges, we are always able to choose one element from some some $B_j$ with $j\in f(I_k)$.

If $m=4$, consider, for all $k\in\omega$, $g(\cup_{i\in f(I_k)}B_i)$. If it selects less than $4$ elements from $f(I_k)$, we are in one of the previous cases and if it selects exactly one element from each $B_i$,  with $i\in f(I_k)$, we are also done. This concludes the proof.
\end{proof}

In the proof of the following theorem we will also use, without going into details, techniques and arguments which were carefully explained in the two preceding proofs.

\begin{theorem}
$\mathrm{6RC_{fin}}\Rightarrow\mathrm{6C^-_{fin}}$.
\end{theorem}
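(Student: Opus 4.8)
The plan is to mirror, line for line, the proof of $\mathrm{4RC_{fin}}\Rightarrow\mathrm{4C_{fin}^-}$, adapting each ingredient to the value $n=6$. First I would reduce to an infinite family $\mathcal{A}=\{A_j:j\in J\}$ of pairwise disjoint finite sets with $|A_j|>6$ (disjointness obtained exactly as in the Lemma), set $x=\bigcup\mathcal{A}$, and apply $\mathrm{6RC_{fin}}$ to get an infinite $y\subseteq x$ and a selection $g\colon[y]^{>6}\to[y]^6$. Passing to an infinite $I\subseteq J$ I may assume $B_i:=A_i\cap y\neq\emptyset$ for all $i\in I$. The cases $|B_i|=6$ (set $f(A_i)=B_i$) and $|B_i|>6$ (set $f(A_i)=g(B_i)$) are immediate. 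Exactly as in the Lemma and the preceding Theorem, re-applying $\mathrm{6RC_{fin}}$ to the leftover $\bigcup_{i\in I}A_i\setminus y$ and accumulating visible atoms disposes of the small configurations, any residual $|B_i|\in\{1,2,3,4\}$ being routed either to an already-solved size or to a selection principle inherited from $\mathrm{6RC_{fin}}$ through $\mathrm{6RC_{fin}}\Rightarrow\mathrm{RC_{6k+1}}$ and Montenegro's $\mathrm{RC_m}\Rightarrow\mathrm{C_j^-}$. This leaves the principal hard case $|B_i|=5$ for all $i\in I$.

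For this case I would run the directed-graph argument. Define $G\subseteq I^2$ by letting $(i,j)$ be an edge iff $B_j\nsubseteq g(B_i\cup B_j)$. Since the $B_i$ are pairwise disjoint, $|B_i\cup B_j|=10$ and $g$ omits exactly $4$ of these $10$ atoms; as the four omitted atoms lie in $B_i\cup B_j$, at least one of $(i,j)$, $(j,i)$ is always an edge, so every unordered pair of vertices is joined by at least one directed edge. The key feature of an edge $(i,j)$ is that $g(B_i\cup B_j)$ splits the five atoms of $B_j$ into a selected part and an omitted part, each of size in $\{1,2,3,4\}$ and summing to $5$; because $5$ is odd these two parts have different cardinalities, so comparing them singles out a canonical nonempty proper subset of $B_j$. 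Thus, as for $n=4$, an edge canonically breaks the symmetry of $B_j$, the only difference being that it now isolates a subset rather than a single atom.

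Next I would reproduce the outdegree bookkeeping. We may assume no $i\in I$ has infinite outdegree (otherwise we select from infinitely many $B_i$ at once), and for each $k\in\omega$ only finitely many vertices have outdegree exactly $k$: if $2k+3$ of them did, then, counting directed edges, $\binom{2k+3}{2}=(2k+3)(k+1)$ would be bounded above by $(2k+3)k$, giving $k+1\le k$. This well-orders $I$ into finite classes $I_k$. Applying $\mathrm{6RC_{fin}}$ to $I$ extracts, for some fixed $1\le m\le 6$, exactly $m$ representatives from infinitely many classes, and I would then treat the cases $m=1,\dots,6$ as in the endgame for $n=4$, using inner edges inside each $f(I_k)$ and cross edges between consecutive classes, now combined with the unequal-split selection of the previous paragraph, to produce a canonical partial selection on infinitely many $B_i$ and convert it into the required $6$-element subsets of the corresponding $A_i$.

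The main obstacle is precisely this core case. For $n=4$ an edge pinned down essentially one atom of a $3$-set, so the endgame collapsed cleanly to choosing a single element; for $n=6$ an edge only yields a canonical proper subset of $B_j$ whose cardinality is not fixed in advance, so the symmetry it breaks must be turned into a full $6$-selection by feeding it back through further applications of $g$ and through the auxiliary selection principles available to $\mathrm{6RC_{fin}}$. Keeping the enlarged case tree coherent — the visible sizes $|B_i|\in\{1,\dots,5\}$ together with the representative counts $m\in\{1,\dots,6\}$, each branch of which must terminate in an already-solved configuration or an available choice principle — is where the real work lies; it is the comparative abundance of low-order reductions attached to $6$ (and the parity of its principal residual $5$) that makes every branch close, in contrast to the values of $n$ not covered by the theorem.
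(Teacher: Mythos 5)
There is a genuine gap, and it lies exactly where you wave your hands. You have identified the wrong case as the hard one. Your directed-graph treatment of the case $|B_i|=5$ is essentially sound and matches the paper, which disposes of $|A_i\cap y|=5$ (and $=4$) by the same outdegree argument followed by a second extraction from $\bigcup_{i\in f(I_k)}(A_i\cap y)$. But note that even when this succeeds it does not finish: an edge, or the second extraction, only yields a canonical \emph{proper} subset of some $B_j$, of size in $\{1,2,3,4\}$, so the case $|B_i|=5$ reduces to precisely the small residual cases that you claimed were already ``routed'' away --- your argument is circular at this point. In the paper, after eliminating visible sizes $5$ and $4$, one extracts two further layers $z$ and $w$ by re-applying $\mathrm{6RC_{fin}}$ to the leftovers, and the irreducible configuration that survives is $|A_j\cap y|=3$, $|A_j\cap z|=|A_j\cap w|=2$, $|A_j|=7$. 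Here all seven atoms of $A_j$ are visible but split across three layers, so no single application of any selection function touches more than one piece, and no further leftover exists to recurse on. This $3{+}2{+}2=7$ configuration is the actual core of the theorem, and your proposal never confronts it. The paper's resolution is a genuinely new construction: apply $\mathrm{6RC_{fin}}$ to the set $E$ of edges of the complete bipartite graphs $E_j=(A_j\cap y)\times(A_j\cap z)$ --- each $E_j$ has exactly $3\cdot 2=6$ edges, so the induced map $\widetilde{f}\colon[E]^7\to E$ singles out a unique omitted edge from each $7$-element union of fibers --- followed by a degree argument on the fibers $G^j_b$, a well-ordered partition $H_k$ by degree, and a delicate endgame. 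Selecting a single edge in some $E_j$ pins down one atom of $A_j\cap y$ or $A_j\cap z$, leaving exactly six atoms of $A_j$, which is why this suffices. None of this machinery, nor any substitute for it, appears in your proposal.

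Your fallback for the small cases is also unsupported. From $\mathrm{6RC_{fin}}$ one inherits $\mathrm{RC_{6k+1}}$, i.e.\ $\mathrm{RC_7},\mathrm{RC_{13}},\dots$, but there is no available implication from these to $\mathrm{C^-_2}$ or $\mathrm{C^-_3}$: Montenegro's theorem is $\mathrm{RC_4}\Rightarrow\mathrm{C^-_4}$ (whence $\mathrm{C^-_2}$), not a general $\mathrm{RC_m}\Rightarrow\mathrm{C^-_j}$, and $\mathrm{RC_4}$ is not among the principles $\mathrm{6RC_{fin}}$ yields (by the paper's own Corollary, $\mathrm{6RC_{fin}}\Rightarrow\mathrm{RC_m}$ only for $m\equiv 1\Mod 6$). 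Indeed the paper has to prove $\mathrm{4RC_{fin}}\Rightarrow\mathrm{C^-_3}$ by hand inside the $n=4$ theorem precisely because no such off-the-shelf implication exists, and it leaves the general relation between $\mathrm{nRC_{fin}}$ and the $\mathrm{C^-_j}$ open. So the sentence routing $|B_i|\in\{1,2,3,4\}$ through ``already-solved sizes or inherited selection principles'' is not a proof step but a restatement of the problem; filling it requires the bipartite-edge construction that constitutes most of the paper's argument.
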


\begin{proof}
Let $\mathcal{A}=\{A_j:j\in J\}$ be an infinite family of pairwise disjoint finite sets. Set $x=\bigcup\mathcal{A}$ and apply $\mathrm{6RC_{fin}}$ to get an infinite $y\subseteq x$ and a function $g\colon[y]^{>6}\to[y]^6$. As usual, we can assume $|A_j\cap y|<6$ for all $j\in J$. Moreover, it is possible to assume $|A_j\cap y|<4$ for all $j\in J$, as well. To see it, take for instance the case in which $|A_i\cap y|=5$ for all the infinitely many $i\in I\subseteq J$. As in the previous theorem, define an oriented graph $G\subseteq I^2$ and let $(i,j)$ be an edge if and only if $A_j\nsubseteq g(A_i\cup A_j)$. This way, we obtain a well ordered partition of $I$ into finite classes $I_k$, for $k\in\omega$, according to the outdegree of each $i\in I$. Apply $\mathrm{6RC_{fin}}$ to $I$ and extract a finite set $f(I_k)$ of at most $6$ elements from each class $I_k$. Then extract again at most $6$ elements from each $\cup_{i\in f(I_k)}(A_i\cap y)$. The only case which is not solved by the last $\textit{extraction}$ is when $|f(I_k)|=1$ for all $k\in\omega$, but this is easily handled as the case $m=1$, at the end of the previous proof. The case when $|A_i\cap y|=4$ for infinitely many $i\in I\subseteq J$ can be solved in the same way.

Now, given $\mathcal{A}=\{A_j:j\in J\}$ and $y\subseteq\bigcup\mathcal{A}$, let $I=\{i\in J:A_i\cap y\neq\emptyset\}$. Apply $\mathrm{6RC_{fin}}$ to $\cup_{i\in I}A_i\setminus y$ to get an infinite $z\subseteq \bigcup\mathcal{A}\setminus y$. Similarly, if $K=\{k\in I:A_k\cap z\neq\emptyset\}$, apply $\mathrm{6RC_{fin}}$ to $\cup_{k\in K}A_k\setminus(y\cup z)$ to get an infinite $w\subseteq \bigcup\mathcal{A}\setminus (y\cup z)$. A straightforward analysis shows that the only non trivial case is given, modulo symmetries, by the one in which
$$
|A_j\cap y|=3,\,|A_j\cap z|=|A_j\cap w|=2\textrm{ and }|A_j|=7,\textrm{ for all }j\in J.$$ 
Our goal is to select one element either from $|A_j\cap y|$ or $|A_j\cap z|$, for infinitely many $j\in J$. In order to do this, we consider the family of edges $\mathcal{E}=\{E_j\coloneqq(A_j\cap y)\times(A_j\cap z):j\in J\}$ and the corresponding partitions 
$$
F^j_a=\{e\in(A_j\cap y)\times(A_j\cap z):e(1)=a\},\,a\in A_j\cap y,
$$ 
$$
G^j_b=\{e\in(A_j\cap y)\times(A_j\cap z):e(2)=b\},\,b\in A_j\cap z.
$$
Notice that for all $j\in J$, $|F^j_a|=2$ and $|G^j_b|=3$. It is easy to see that whenever we select a proper subset of $E_j$ for some $j\in J$, we are able to select one element from $A_j\cap y$ or from $A_j\cap z$. Also for this reason, when applying $\mathrm{6RC_{fin}}$ to $E\coloneqq\bigcup\mathcal{E}$, we can assume that we get a selection function $f$ on the set of all edges $E$. To simplify the notation, let $\widetilde{f}$ be defined as $\widetilde{f}\colon[E]^7\to E$, $\widetilde{f}\colon S\mapsto S\setminus f(S)$. Now, for $j\in J$ and $b\in A_j\cap z$, define the degree $$\mathrm{deg}(G^j_b)=|\{F^i_a\cup F^i_{a'}:i\in J\wedge a,a'\in(A_j\cap z)\wedge \widetilde{f}(G^j_b\cup F^i_a\cup F^i_{a'})\in F^i_a\cup F^i_{a'}\}|.$$

We can assume that every $G_b^j$ has finite degree, since we would be otherwise able to select a proper subset from infinitely many $E_j$. In addition, assume that for some $k_0\in\omega$ there are infinitely many $G^j_b$ with degree equal to $k_0$. Then order $k_0+1$ distinct $4$-element sets of the form $F^i_a\cup F^i_{a'}$ for some $i\in J$ and $a,a'\in(A_j\cap z)$. For each $G^j_b$, there must be a first of these $k_0+1$ sets with the property that $\widetilde{f}(G^j_b\cup F^i_a\cup F^i_{a'})\in G^j_b$, but this fact allows us to select one edge from each $G^j_b$ with degree equal to $k_0$. Thus, assume that for each $k\in\omega$ there are only finitely many $G^j_b$ with degree $k$. This gives us a well ordered partition of $\{G^j_b:j\in J\wedge b\in A_j\cap z\}$ into finite subclasses. Explicitly into the subclasses
$$
H_k=\{G^j_b:\mathrm{deg}(G^j_b)=k\}.
$$
Apply one last time the function $f$ to each $\bigcup H_k$ and notice that the only case in which we are not able to select a proper subset from infinitely many $E_j$, is when, for all but finitely many $k\in\omega$, $f(\bigcup H_k)=E_i$ for some $i\in J$. We conclude the proof by solving this last case. Suppose that for infinitely many $k\in\omega$, given $f(\bigcup H_k)=E_i$, there is at least one $G^i_b\subseteq E_i$ such that for an $l\in J$ and a $k'\in\omega$, with $f(\bigcup H_{k'})=E_l$, it is possible to select an element from $G^i_b$ by considering the set 
$$
\mathrm{sel}(G_b^i,l)\coloneqq\{\widetilde{f}(G^i_b\cup F^l_a\cup F^l_{a'}):|F^l_a\cup F^l_{a'}|=4\wedge F^l_a\cup F^l_{a'}\subseteq E_l\}.
$$ 
Then we can conclude by choosing, for each such $k\in\omega$, the first $k'\in\omega$ with the mentioned property. If that is not the case, fix $k_1\in\omega$ with $f(H_{k_1})=E_i$ such that for infinitely many $j\in J$ and $k\in\omega$ with $f(H_k)=E_j$ we have that 
$$
|\mathrm{sel}(G_b^i,j)|=3\textrm{ for both }b\in A_i\cap z,
$$ 
and conclude by fixing some $b_0\in A_i\cap z$ and $a_0\in A_i\cap y$. This selects a proper subset from infinitely many $E_j$, namely that unique $F^j_a\cup F^j_{a'}\subseteq E_j$ such that $$\widetilde{f}(G^i_b\cup F^j_a\cup F^j_{a'})=(a_0,b_0).$$
\end{proof}

We conclude the subsection with an example of how it is possible to obtain a weaker implication than $\mathrm{nRC_{fin}}\implies\mathrm{nC^-_{fin}}$ for some infinite class of cases. $\mathrm{p^kWOC^-_{fin}}$ is essentially the same axiom as $\mathrm{p^kC^-_{fin}}$. The only difference is that we require the family of finite sets to be well-ordered.

\begin{theorem}
For all primes $p$ and all natural numbers $k$, $\mathrm{p^kRC_{fin}}\Rightarrow\mathrm{p^kWOC^-_{fin}}$.
\end{theorem}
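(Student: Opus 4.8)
The plan is to follow the build-up strategy of the previous proofs, but to exploit the well-ordering of the family together with two features of prime powers: a divisibility dichotomy and the feasibility of a subset-sum. Throughout I would assume, by disjointifying and passing to the first $\omega$ members (the family being well-ordered), that $\mathcal{F}=\{A_n:n\in\omega\}$ consists of pairwise disjoint finite sets with $|A_n|>p^k$.

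\textbf{Phase 1 (build-up).} First I would apply $\mathrm{p^kRC_{fin}}$ to $\bigcup\mathcal{F}$ to get an infinite $y_0$ and a selector $g_0\colon[y_0]^{>p^k}\to[y_0]^{p^k}$. Since the $A_n$ are finite, infinitely many meet $y_0$, so by pigeonhole I restrict to an infinite index set on which $|A_n\cap y_0|$ is a constant. If this constant is $\ge p^k$ we are already done (apply $g_0$ to $A_n\cap y_0$, or take $A_n\cap y_0$ itself). Otherwise I repeat, applying $\mathrm{p^kRC_{fin}}$ to $\bigcup_n(A_n\setminus S_n)$, where $S_n$ is the union of the intersections collected so far. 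Each round strictly increases the (uniform) size of $S_n$, so after finitely many rounds I reach an infinite subfamily on which $S_n=\bigsqcup_{t<T}(A_n\cap y_t)$ has constant size $s\ge p^k$, with each slot $A_n\cap y_t\subseteq y_t$ of constant size and equipped with its own selector $g_t$.

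\textbf{Phase 2 (refinement into $p$-power pieces).} Here I would prove the key lemma: for an infinite, well-ordered family of pairwise disjoint $q$-element sets $P_n\subseteq y_t$, either one can canonically split infinitely many $P_n$ into two nonempty parts, or $q$ is a power of $p$. If $q>p^k$, then $g_t(P_n)$ is itself a nonempty proper subset. If $q\le p^k$, I pick the least $r$ with $rq>p^k$, colour each $r$-subset $\{n_1<\dots<n_r\}$ by the vector $\big(|g_t(P_{n_1}\cup\dots\cup P_{n_r})\cap P_{n_i}|\big)_{i\le r}$, and apply the infinite Ramsey theorem on the well-ordered index set to obtain a homogeneous subfamily with a constant distribution $(a_1,\dots,a_r)$. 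If some $a_i\notin\{0,q\}$, then placing $P_n$ in position $i$ by choosing the requisite least indices below and above $n$ (this is where the well-ordering enters) selects the proper nonempty subset $g_t(\dots)\cap P_n$. If instead every $a_i\in\{0,q\}$, then $\sum a_i=p^k$ forces $q\mid p^k$, i.e.\ $q=p^j$. Iterating this lemma slot by slot refines, after finitely many rounds, the partition of each $S_n$ into pieces whose common size profile consists of powers of $p$, each of size $\le p^k$ (anything larger is split by the first case), summing to $s\ge p^k$.

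\textbf{Phase 3 (assembly).} Finally I would observe that a multiset of powers of $p$ summing to at least $p^k$ always admits a sub-multiset summing to exactly $p^k$ (a short greedy argument, using that the target is itself a power of $p$). Choosing such a sub-collection of pieces canonically — again using the well-ordering to break ties — yields, for every $n$ in the final infinite subfamily, a distinguished $p^k$-element subset of $S_n\subseteq A_n$, which is the required selection function. The main obstacle is Phase 2: the fact that a stuck distribution forces $q\mid p^k$ is exactly what makes the prime-power hypothesis indispensable, and the three appeals to the well-ordering (infinite Ramsey with finitely many colours, the canonical choice of auxiliary indices, and the canonical subset-sum) are precisely what confine the conclusion to $\mathrm{p^kWOC^-_{fin}}$ rather than the full $\mathrm{p^kC^-_{fin}}$, since in $\mathrm{ZF}$ none of these steps is available without a well-ordering of the family.
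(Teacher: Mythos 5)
Your proposal is correct and shares the paper's overall architecture: finitely many applications of $\mathrm{p^kRC_{fin}}$ to build up traces $S_n$ of uniform size, a device for properly splitting pieces whose size fails to divide $p^k$, and the subsum fact that a collection of divisors of $p^k$ summing to at least $p^k$ contains a subcollection summing to exactly $p^k$\,---\,your Phase 3 greedy argument is precisely the paper's ``first consideration''. Where you genuinely diverge is in the splitting step. The paper's ``second consideration'' is elementary: given a well-ordered family of $m$-element sets with $m\nmid p^k$, group it into consecutive blocks of $l$ sets, where $l$ is least with $lm>p^k$, and apply the selector to the union of each block; since the selected $p^k$-element set cannot be a union of whole $m$-sets (as $m\nmid p^k$), some member of each block is properly split, and the well-ordering picks the least such member per block. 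You instead take the least $r$ with $rq>p^k$, colour $r$-tuples of indices by the distribution vector of the selector over the $r$ pieces, and invoke infinite Ramsey on the well-ordered index set, concluding $q\mid p^k$ in the stuck case where all coordinates lie in $\{0,q\}$\,---\,exactly the contrapositive of the paper's hypothesis. This route is valid: Ramsey's theorem for $[\omega]^r$ with finitely many colours is a theorem of $\mathrm{ZF}$, your positional trick (least auxiliary indices of the homogeneous set below and above $n$) makes the selection definable from the homogeneous set, and only finitely many rounds occur (at most $p^k$ build-up rounds, and splitting rounds strictly decrease piece sizes), so no choice is needed anywhere. What your version buys is an explicit explanation of \emph{why} non-$p$-power sizes always split; what the paper's block argument buys is economy, avoiding Ramsey entirely. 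One small point to tighten: in Phase 1 the pigeonhole onto a constant value of $|A_n\cap y_t|$ requires first observing that if these sizes are unbounded then infinitely many of them are at least $p^k$ and you are done immediately; your ``constant $\geq p^k$'' clause covers this in spirit but should be stated before the pigeonhole, not after.
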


\begin{proof}
Let $\mathcal{A}=\{A_i:i\in\omega\}$ be a well-ordered family of finite sets such that $|A_i|>p^k$ for all $i\in\omega$. $\mathrm{p^kWOC^-_{fin}}$ is basically obtained by repeated applications of $\mathrm{p^kRC_{fin}}$ to $\bigcup\mathcal{A}$, together with the following two considerations: The first is that if a finite sum $\sum a$ of divisors of $p^k$ is such that $\sum a>p^k$, then it is possible to extract a subsum $\sum a'$ such that $\sum a'=p^k$. The second consideration, which allows us to conclude the proof, is the following: Given a well-ordered family $\mathcal{B}=\{B_i:i\in B\}$ of finite sets of the same size $m\nmid p^k$, with $\mathrm{p^kRC_{fin}}$ we can extract a family of subsets $\mathcal{B'}=\{B'_i:i\in B'\subseteq B\}$ such that for every $i\in B'$, $\emptyset\subsetneq B'_i\subsetneq B_i$. To see this, it is enough to apply $\mathrm{p^kRC_{fin}}$ to $\bigcup\mathcal{B}$ and, if needed, to choose $p^k$ elements from the union the first $l$ sets, where $l$ is the least natural number such that $lm>p^k$, and repeat for every next block of $l$ elements of the family $\mathcal{B}$.
\end{proof}

\subsection{Negative}

A partial negative answer is provided by the models $\mathcal{V}$, introduced and used in \cite{lorenz}, to which we refer for more detailed explanations. In general, the model $\mathcal{V}_n$ has a countable set of atoms $A$ partitioned in blocks $A_i=\{a^i_1,\dots,a^i_n\},i\in\mathbb{Q},$ of size $n$ which are linearly ordered isomorphically to $\mathbb{Q}$. The normal ideal is the one given by the finite subsets and the permutation group $G$ is the one generated by all those permutations $\varphi_i$ on $A$ that act as the identity on $A\setminus A_i$ and as the cycle $(a^i_1,\dots,a^i_n)$ on $A_i$, for some $i\in\mathbb{Q}$. $\mathcal{V}_n$ is generalized to $\mathcal{V}_{n_1,\dots,n_l}$, which is built basically in the same way, but in which the set of atoms is partitioned in $l$ distinct and disjoint $\mathbb{Q}$-lines of blocks. We have the following result.

\begin{theorem}
Let $l\in\omega$, $p_1,\dots,p_l$ be distinct primes and $a_1,\dots,a_l$ natural numbers greater than $0$. Then, we have that $$\mathcal{V}_{p_1^{a_1},\dots,p_l^{a_l}}\models\mathrm{nRC_{fin}}\iff\mathcal{V}_{p_1^{a_1},\dots,p_l^{a_l}}\models\mathrm{nC^-_{fin}}\iff n\textrm{ is a multiple of }\prod_{k=1}^lp_k^{a_k}.$$
\end{theorem}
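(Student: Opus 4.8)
The plan is to prove the whole chain by establishing two grouped implications: if $P:=\prod_{k=1}^{l}p_k^{a_k}$ does not divide $n$ then both $\mathrm{nRC_{fin}}$ and $\mathrm{nC^-_{fin}}$ fail in the model, and if $P\mid n$ then both hold. Since this makes each of $\mathrm{nRC_{fin}}$ and $\mathrm{nC^-_{fin}}$ equivalent to the arithmetic condition ``$P\mid n$'', the three statements are pairwise equivalent and the theorem follows. Throughout I would exploit two structural features of these models. First, every $\pi\in G$ fixes each block setwise, so the collection of blocks is well-orderable inside the model and any block-level datum has empty support. Second, if $B$ is a block of size $p_k^{a_k}$ lying outside a finite support $E$, then the full cyclic group on $B$ belongs to $\mathrm{Fix}(E)$, whence the only subsets of $B$ fixed by $\mathrm{Fix}(E)$ are $\emptyset$ and $B$. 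The lemma I would record first is the resulting rigidity statement: if $U$ is a finite union of blocks disjoint from $E$ and $S\subseteq U$ is fixed by $\mathrm{Fix}(E)$, then $S$ is a union of full blocks, so $|S|$ is a sum of block sizes.

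For the converse direction assume $P\nmid n$. As the $p_k^{a_k}$ are pairwise coprime prime powers, $P\mid n$ is equivalent to $p_k^{a_k}\mid n$ for every $k$, so there is an index $k_0$ with $q:=p_{k_0}^{a_{k_0}}\nmid n$. Fix the least $r$ with $rq>n$ and let $\mathcal F$ be the family of all unions of $r$ distinct blocks of the $k_0$-th line. This family has empty support (because $G$ fixes blocks setwise), is infinite, and each member has size $rq>n$. Given any infinite $\mathcal A\subseteq\mathcal F$ and any candidate selector $f$ with finite support $E$, choose $U\in\mathcal A$ disjoint from $E$; by the rigidity lemma $f(U)$ must be a union of full line-$k_0$ blocks, so $q\mid|f(U)|$, contradicting $|f(U)|=n$. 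Hence $\mathrm{nC^-_{fin}}$ fails. Applying the same idea to $x=\bigcup\mathcal F$ (the set of all line-$k_0$ atoms) refutes $\mathrm{nRC_{fin}}$: any infinite $y\subseteq x$ in the model contains, for each finite $E$, infinitely many full line-$k_0$ blocks disjoint from $E$, so an appropriate $z\in[y]^{>n}$ built from $r$ such blocks again forces $q\mid|f(z)|\neq n$.

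For the forward direction assume $P\mid n$. For $\mathrm{nRC_{fin}}$ I would first dispose of the case in which the given $x$ has an infinite well-orderable subset $y$: a well-ordering of $y$ lives in the model, and taking the first $n$ elements of each $z\in[y]^{>n}$ gives a symmetric selector for every $n$. In the remaining case one uses the well-orderability of the blocks to extract an infinite $y\subseteq x$ assembled from full blocks and defines $f(z)$ canonically, decomposing $z$ according to the blocks it meets and selecting an $n$-element union of orbits, with all ties broken by the fixed well-ordering of blocks so that $f$ is equivariant. For $\mathrm{nC^-_{fin}}$ the parallel plan is a reduction: send each $A\in\mathcal F$ to the finite set of blocks its support meets, which lands in the well-orderable set of finite sets of blocks, then apply the $\Delta$-system lemma and a pigeonhole over the possible ``block-types'' to obtain an infinite subfamily $\mathcal A$ whose members are pairwise carried onto one another by automorphisms fixing a common finite $E$; one then selects a $\mathrm{Fix}(E)$-invariant $n$-subset of a single representative and transports it along these automorphisms.

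The step I expect to be the genuine obstacle is the realization inside the forward direction, namely producing, for the reduced members of size greater than $n$, a $\mathrm{Fix}(E)$-invariant subset of size exactly $n$, uniformly across the subfamily. This is subtle precisely because the admissible cardinalities of invariant subsets are the sums of block sizes and of partial-block orbit sizes—each of which divides $P$ and hence $n$—yet such sums need not reach $n$ automatically once the total exceeds it. Consequently the reduction cannot be naive: one must arrange $\mathcal A$ so that the available block-pieces combine to the target, controlling the partial-block contributions (whose admissible sizes are dictated by the subgroup lattice governing each block) while keeping the choice equivariant via the well-ordering of blocks. It is here that the full force of $P\mid n$ together with the coupling of the $l$ lines must be used, and this is the technical heart of the proof.
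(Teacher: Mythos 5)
The skeleton (reduce both principles to the arithmetic condition, treat the two directions in parallel) is reasonable, and your refutation of $\mathrm{nRC_{fin}}$ when $q:=p_{k_0}^{a_{k_0}}\nmid n$ is essentially the paper's: every infinite symmetric $y$ must contain full blocks disjoint from any support (the block cycles lie in $\mathrm{fix}_G(E)$ and force whole blocks into $y$), and evaluating $f$ at a union $z$ of $r$ such blocks forces $q\mid|f(z)|=n$, which is what the paper means by ``this can easily be shown on the set of all atoms''. But your negative argument for $\mathrm{nC^-_{fin}}$ has a genuine gap: the step ``choose $U\in\mathcal{A}$ disjoint from $E$'' is not available, since an infinite subfamily may consist entirely of members containing one fixed block, and for your specific witnessing family this loophole is fatal. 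Concretely, let $r$ be minimal with $rq>n$ and write $n=(r-1)q+t$ with $0<t<q$; fix a block $B_0$ of the $k_0$-th line and a $t$-element set $T_0\subseteq B_0$. Then $\mathcal{A}_0=\{U\in\mathcal{F}:B_0\subseteq U\}$ is an infinite subfamily (as soon as $r\ge 2$, i.e.\ $n>q$) carrying the symmetric selector $f(U)=(U\setminus B_0)\cup T_0$: it has support $B_0$, is equivariant because any $\pi\in\mathrm{fix}_G(B_0)$ maps blocks to blocks and fixes $T_0$ pointwise, and $|f(U)|=(r-1)q+t=n$. So your family does not witness failure of $\mathrm{nC^-_{fin}}$ (your argument survives only in the degenerate case $r=1$, i.e.\ $n<q$, where members are single pairwise disjoint blocks). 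This is precisely why the paper does not argue this equivalence directly but cites \cite[Proposition 5.3, Lemma 5.4]{salome}; the difference between ``$f$ defined on all of $[y]^{>n}$'' (where you may evaluate at support-free sets) and ``$f$ defined only on an adversarially chosen subfamily'' is the whole point of the contrast between the two principles.

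The positive direction is also not salvageable as sketched, for two reasons. First, it rests on the premise that $G$ fixes every block setwise, so that the blocks are well-orderable in the model and all ties can be broken by that well-ordering. That premise contradicts the structure the paper imports from \cite[Fact 4]{lorenz}, where the partition $\mathcal{Y}$ is only \emph{linearly} orderable and the range of the constructed function is $\{A_q:q\in I\}$ for an interval $I\subseteq\mathbb{Q}$: the group moves blocks along order-automorphisms of the $\mathbb{Q}$-line, and no well-ordering of the blocks exists in the model (if it did, selector tricks like the $\mathcal{A}_0$ one above would proliferate and the stated equivalences would collapse). Second, for a general infinite set $x$ whose elements are not atoms, ``decomposing $z$ according to the blocks it meets'' is meaningless; the paper's substitute is exactly the Fact-4 machinery, producing $y\subseteq x$ with an equal-cardinality partition $\mathcal{Y}$ whose piece size divides $p_1^{a_1}$, after which the exact-size selection you rightly flag as the crux is resolved by prime-power arithmetic: all orbit sizes inside an $s\in[y]^{>p_1^{a_1}}$ divide $p_1^{a_1}$, and since the divisors of a prime power form a chain, any sum of such divisors exceeding $p_1^{a_1}$ admits a subsum equal to $p_1^{a_1}$, yielding an invariant $\tilde{s}$ of size exactly $p_1^{a_1}$. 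Your worry that the admissible sums ``need not reach $n$ automatically'' is dissolved by this chain property, and general multiples $n$ of $\prod_k p_k^{a_k}$ then come for free from Lemma \ref{lemma} ($\mathrm{nRC_{fin}}\Rightarrow\mathrm{knRC_{fin}}$) together with the reduction to $l=1$, neither of which your plan invokes.
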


\begin{proof}
It suffices to prove the theorem for $l=1$. The general case then follows from
$$
\mathcal{V}_{p_1^{a_1},\dots,p_l^{a_l}}\models\mathrm{nRC_{fin}}\iff\bigwedge_{k=1}^l\mathcal{V}_{p_k^{a_k}}\models\mathrm{nRC_{fin}},
$$ 
and 
$$
\mathcal{V}_{p_1^{a_1},\dots,p_l^{a_l}}\models\mathrm{nC^-_{fin}}\iff\bigwedge_{k=1}^l\mathcal{V}_{p_k^{a_k}}\models\mathrm{nC^-_{fin}}.
$$ 
In \cite[Proposition 5.3, Lemma 5.4]{salome}  it is shown that 
$$
\mathcal{V}_{p_1^{a_1}}\models\mathrm{nC^-_{fin}}\iff n\textrm{ is a multiple of }p_1^{a_1}.
$$ 
It remains to show that the same holds for $\mathrm{nRC_{fin}}$. We start with showing that $\mathrm{p_1^{a_1}RC_{fin}}$ holds. In order to do so, we use the construction from \cite[Fact 4]{lorenz} , which we now briefly recall. To help the reader, we use the same notation. Let $x$ be an infinite not well-orderable set with support $E$ and $z\in x$ an element with support $E_z$ which is not supported by $E$. Let $A_r$ be a block of atoms included in $E_z$ but not in $E$. Then, if we define the set $f$ as $$f=\{(\varphi(z),\varphi(A_r)):\varphi\in\mathrm{fix}_G(E_z\setminus A_r)\},$$ the following statements hold:
\begin{itemize}
    \item $f$ is supported by $E_z\setminus A_r$;
    \item $f$ is a function with $\mathrm{dom}(f)\subseteq x$ and $\mathrm{ran}(f)=\{A_q:q\in I\}$ for some possibly unbounded interval $I\subseteq\mathbb{Q}$;
    \item if $y=\mathrm{dom}(f)$ and $\mathcal{Y}=\{f^{-1}(A_q):q\in I\}$, then $\mathcal{Y}$ is a linearly orderable partition of $y$;
    \item the elements of $\mathcal{Y}$ are finite sets all having the same cardinality, which has to be a divisor of $p_1^{a_1}$;
    \item we can write $\mathcal{Y}=\{U_\varphi:\varphi\in\mathrm{fix}_G(E_z\setminus A_r)\}$, where for $\varphi\in\mathrm{fix}_G(E_z\setminus A_r)$, 
    $$
    U_\varphi=\{\eta z:\eta\in\mathrm{fix}_G(E_z\setminus A_r),\varphi^{-1}\eta(A_r)=A_r\}.
    $$
\end{itemize}
Consider now the orbits $O_s=\{\varphi(s):s\in[y]^{>p_1^{a_1}},\varphi\in\mathrm{fix}_G(E_z\setminus A_r)\}$ and write $\mathcal{O}=\{O_s:s\in[y]^{>p_1^{a_1}}\}$. The goal is to show that it is possible to choose for each $O_s$ a subset $\tilde{s}\subsetneq s$ such that $|\tilde{s}|=p_1^{a_1}$ and if $O_s=O_t$ with $\varphi(s)=t$, then $\varphi(\tilde{s})=\tilde{t}.$ Notice that this is equivalent to requiring that every time $\varphi(s)=s$ for some $\varphi\in\mathrm{fix}_G(E_z\setminus A_r)$, then $\varphi(\tilde{s})=\tilde{s}.$ Now, fix an $O_s\in\mathcal{O}$. Notice that if $s$ is a union $s=\bigcup\{U_\varphi:\varphi\in P_s\}$ for some subset $P_s\subseteq\mathrm{fix}_G(E_z\setminus A_r)$ the conclusion is trivial. To deal with the other cases, once more we will fully rely on the fact that if a sum of divisors of $p_1^{a_1}$ is greater than $p_1^{a_1}$, then there is a subsum equal to $p_1^{a_1}$. Indeed, notice that for all $a\in s$, the cardinality of $\{\varphi(a):\varphi\in\mathrm{fix}_G(E_z\setminus A_r),\varphi(s)=s\}$ has to be a divisor of $p_1^{a_1}$. The conclusion is given by the last claim together with the fact that if $\tilde{s}\subseteq s$ is a union of orbits in the form $\{\varphi(a):\varphi\in\mathrm{fix}_G(E_z\setminus A_r),\varphi(s)=s\}$, then $\varphi(s)=s$ implies $\varphi(\tilde{s})=\tilde{s}$.

To finish the proof we have to show that $\mathrm{nRC_{fin}}$ is false in $\mathcal{V}_{p_1^{a_1}}$ whenever $n$ is not a multiple of $p_1^{a_1}$. But this can easily be shown on the set of all atoms.
\end{proof}

\section{Intermezzo: A new model}

Fix a positive integer $n$ and let $\mathcal{L}_n$ be the signature containing an $(m+n)$-place relation symbol $\mathrm{Sel_m}$ for each $m\in\omega$ with $m>n$. Let $\mathrm{T_n}$ be the $\mathcal{L}_n$-theory containing the following axiom schema:\vspace{1em}

\begin{addmargin}[27pt]{27pt}
\textit{For each $m\in\omega$ with $m>n$, we have 
$$\mathrm{Sel_m}(x_1,\ldots,x_m,\,x_1',\ldots,x_n')$$
if and only if the following holds:
\begin{itemize}
\item $\bigwedge_{1\le i<j\le m} x_i\neq x_j\;\wedge \bigwedge_{1\le i<j\le n} x_i'\neq x_j'$
\item For each $1\le j\le n$ there is a $1\le i\le m$ such that $x_j'=x_i$.
\item For any $m$ pairwise distinct elements $x_1,\ldots,x_m$ there are $x_1',\ldots,x_n'$ such
that $\mathrm{Sel_m}(x_1,\ldots,x_m,\,x_1',\ldots,x_n')$.
\item If\/ $\mathrm{Sel_m}(x_1,\ldots,x_m,\,x_1',\ldots,x_n')$ and $\rho$ is a permutation 
of\/ $\{1,\ldots,m\}$, then\/ $\mathrm{Sel_m}(x_{\rho(1)},\ldots,x_{\rho(m)},\,x_1',\ldots,x_n')$ 
\end{itemize}
}
\end{addmargin} \vspace{1em}

\noindent In any model of the theory $\mathrm{T_n}$, the set of all the relations $\mathrm{Sel_m}$ is equivalent to a function $\mathrm{Sel}$ which assigns an $n$-element subset to any finite and big enough set. So, for the sake of simplicity we shall write $\mathrm{Sel}(\{x_1,\ldots,x_m\})=\{x_1',\ldots,x_n'\}$ instead of $\mathrm{Sel_m}(x_1,\ldots,x_m,\,x_1',\ldots,x_n')$.
\newline

\noindent For a model $\mathbf{M}$ of $\mathrm{T_n}$ with domain $M$, we will simply write $M\models\mathrm{T_n}$. Let $$\widetilde{C}=\{M:M\in\mathrm{fin}(\omega)\wedge M\models\mathrm{T_n}\}.$$ Evidently $\widetilde{C}\neq\emptyset$. Partition $\widetilde{C}$ into maximal isomorphism classes and let $C$ be a set of representatives. We proceed with the construction of the set of atoms for our permutation model. The next theorem and its proof are taken from \cite[Ch.\,8]{libro}, with a minor difference which will play an essential role in our work.

\begin{theorem}
For any positive integer $n$ there exists a model $\mathbf{F}\models\mathrm{T_n}$ with domain $\omega$ such that:
\begin{itemize}
    \item Given a non empty $M\in C$, $\mathbf{F}$ admits infinitely many submodels isomorphic to~$M$.
    \item Any isomorphism between two finite submodels of\/ $\mathbf{F}$ can be extended to an automorphism of\/ $\mathbf{F}$.
\end{itemize}
\end{theorem}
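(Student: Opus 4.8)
The plan is to recognize $\mathbf{F}$ as the Fra\"iss\'e limit of the class $C$. First I would strip away the relational packaging and note that a model of $\mathrm{T_n}$ is nothing more than a set $X$ equipped with a \emph{selection function} $\mathrm{Sel}$ that assigns to every finite $S\subseteq X$ with $|S|>n$ some $n$-element subset $\mathrm{Sel}(S)\subseteq S$, with \emph{no} coherence imposed between the values of $\mathrm{Sel}$ on different sets beyond the permutation-invariance already built into the axioms. The members of $C$ are then exactly the finite such structures, up to isomorphism, and embeddings are the maps respecting $\mathrm{Sel}$.

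Next I would verify that $C$ is a Fra\"iss\'e (amalgamation) class. The hereditary property is immediate: if $D$ is a substructure of $M\models\mathrm{T_n}$ and $S\subseteq D$ has $|S|>n$, then $\mathrm{Sel}^M(S)\subseteq S\subseteq D$, so the restricted relations again give a selection function and $D\models\mathrm{T_n}$. For countability, observe that although $\mathcal{L}_n$ has infinitely many relation symbols, on a $k$-element structure $\mathrm{Sel_m}$ is vacuous for $m>k$; hence there are only finitely many structures of each finite size and $C$ is countable. The empty structure lies in $C$, and $C$ has a member of every finite cardinality. For amalgamation I would take the free amalgam $D=B\cup_A C$ over a common substructure $A$ (so $B\cap C=A$) and define $\mathrm{Sel}^D$ by using $\mathrm{Sel}^B$ on subsets of $B$, $\mathrm{Sel}^C$ on subsets of $C$, and an arbitrary choice on the remaining crossing subsets. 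This is well defined because the only overlap occurs on subsets of $A$, where $\mathrm{Sel}^B$ and $\mathrm{Sel}^C$ both restrict to $\mathrm{Sel}^A$; the absence of coherence axioms in $\mathrm{T_n}$ is exactly what makes the arbitrary extension legitimate. The same construction over $A=\emptyset$ yields the joint embedding property.

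With these properties in hand I would invoke Fra\"iss\'e's theorem (the textbook statement) to obtain the countable homogeneous structure whose age is $C$. Since $C$ has members of every finite size, this limit is infinite, so after relabelling I may take its domain to be $\omega$ and call it $\mathbf{F}$. That $\mathbf{F}\models\mathrm{T_n}$ as a structure on all of $\omega$ follows from writing $\mathbf{F}$ as an increasing union of finite members of $C$: any finite $S$ lies in one of them, which supplies $\mathrm{Sel}(S)$. Homogeneity of the limit is precisely the statement that every isomorphism between two finite submodels extends to an automorphism of $\mathbf{F}$, which is the second bullet.

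For the first bullet I would use that $C$ is closed under finite disjoint unions: given a nonempty $M\in C$ and any $k$, the disjoint union of $k$ copies of $M$ (with $\mathrm{Sel}$ defined copy-wise and arbitrarily on crossing subsets) is again a finite model of $\mathrm{T_n}$, hence lies in $C$ and embeds into $\mathbf{F}$; this produces $k$ pairwise disjoint submodels isomorphic to $M$, and letting $k$ range over $\omega$ gives infinitely many. I expect the genuine work to lie not in the amalgamation step, which is trivial here because $\mathrm{T_n}$ links the values of $\mathrm{Sel}$ on distinct sets only through permutation-invariance, but rather in the two bookkeeping points absent from the textbook statement: checking that the limit really models the full schema $\mathrm{T_n}$ on the infinite domain $\omega$, and upgrading ``each $M$ embeds'' to ``each $M$ embeds infinitely often,'' which is the minor difference emphasized before the statement and the feature the permutation model will later exploit.
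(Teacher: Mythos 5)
Your proof is correct for the statement as written, but it takes a genuinely different route from the paper. You package the finite models of $\mathrm{T_n}$ as a Fra\"iss\'e class (hereditariness, countability since $\mathrm{Sel_m}$ is vacuous above the cardinality of the structure, free amalgamation exploiting the absence of coherence between values of $\mathrm{Sel}$ on distinct sets) and invoke Fra\"iss\'e's theorem, getting the second bullet as homogeneity and the first from closure of $C$ under finite disjoint unions; all of these verifications are sound, including the point that an increasing union of finite models still satisfies the schema because the witnesses for $\mathrm{Sel}$ lie inside the set being selected from. The paper instead runs the construction by hand, building $\mathbf{F}=\bigcup_{s\in\omega}F_s$ stage by stage, adjoining at stage $s+1$ a new point $a_l$ for every embedding $j_l\colon F_s|_{A_i}\hookrightarrow R_k$ with $|R_k|=|A_i|+1$, and extending finite isomorphisms by an explicit back-and-forth. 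The two arguments prove the same theorem, and yours is arguably cleaner; but note that you misidentify the ``minor difference'' flagged before the statement. It is not the infinitely-many-copies clause (routine on either approach) but the \emph{controlled} completion of the selection function at each stage: for any $\{x_1,\dots,x_{m'}\}\subseteq F_{s+1}$ not yet assigned a value, the paper imposes $\mathrm{Sel}(\{x_1,\dots,x_{m'}\})=\{x_{m'-n+1},\dots,x_{m'}\}$, the $n$ largest elements in the $\omega$-ordering, whereas your amalgamation makes these choices arbitrarily. Your $\mathbf{F}$ therefore satisfies the two bullets but need not have this extra feature, and the paper's proof of Theorem \ref{thm:7.2} explicitly leans on it (in the case $\{p,l\}\subseteq X$, where $\gamma$ is shown to commute with $\mathrm{Sel}$ ``because of the particular care we took in the construction''). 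So as a proof of the displayed theorem your argument stands; as a component of the paper it would need to be supplemented, e.g.\ by performing the free amalgamations with the same ``$n$ largest elements'' rule on the crossing sets, to recover the property the permutation model later exploits.
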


\begin{proof}
The construction of $\mathbf{F}$ is by induction on $\omega$. Let $F_0=\emptyset$. $F_0$ is trivially a model of $\mathrm{T_n}$ and, for every element $M$ of $C$ with $|M|\leq0$, $F_0$ contains a submodel isomorphic to $M$. Let $F_s$ be a model of $\mathrm{T_n}$ with a finite initial segment of $\omega$ as domain and such that for every $M\in C$ with $|M|\leq s$, $F_s$ contains a submodel isomorphic to $M$. Let
\begin{itemize}
    \item $\{A_i:i\leq p\}$ be an enumeration of $[F_s]^{\leq n}$,
    \item $\{R_k: k\leq q\}$ be an enumeration of all $M\in C$ with $1\leq|M|\leq s+1$,
    \item $\{j_l:l\leq u\}$ be an enumeration of all the embeddings $j_l:F_s|_{A_i}\xhookrightarrow{} R_k$, where $i\leq p$, $k\leq q$ and $|R_k|=|A_i|+1$.
\end{itemize}
For each $l\leq u$, let $a_l\in\omega$ be the least natural number such that $a_l\notin F_s\cup\{a_{l'}:l'<l\}$. The idea is to add $a_l$ to $F_s$, extending $F_s|_{A_i}$ to a model $F_s|_{A_i}\cup\{a_l\}$ isomorphic to $R_k$, where $j_l:F_s|_{A_i}\xhookrightarrow{} R_k$. Define $F_{s+1}:=F_s\cup\{a_l:l\leq u\}$. 

In \cite{libro}, $F_{s+1}$ is made into a model of $\mathrm{T_n}$ in a non-controlled way, while here we impose the following: Let $\{x_1,\dots,x_{m^\prime}\}$ be a subset of $F_{s+1}$ from which we have not already chosen an $n$-element subset. Suppose $m^{\prime}>n$ and that $i>j$ implies $x_i>x_j$ (recall that $F_{s+1}$ is a subset of $\omega$). Then we simply impose $\mathrm{Sel}(\{x_1,\dots,x_{m^{\prime}}\})=\{x_{m^{\prime}-n+1},\dots,x_{m^{\prime}}\}$.\smallskip

The desired model is finally given by $\mathbf{F}=\bigcup_{s\in\omega}F_s$.\smallskip

\noindent We conclude by showing that every isomorphism between finite submodels can be extended to an automorphism of $\mathbf{F}$. Let $i_0:M_1\to M_2$ be an isomorphism of $\mathrm{T_n}$-models. Let $a_1$ be the least natural number in $\omega\setminus(M_1\cup M_2)$. Then $M_1\cup M_2\cup\{a_1\}$ is contained in some $F_n$ and by construction we can find some $a'_1\in\omega$ such that $\mathbf{F}|_{M_1\cup\{a_1\}}$ is isomorphic to $\mathbf{F}|_{M_2\cup\{a'_1\}}$. Extend $i_0$ to $i_1:M_1\cup\{a_1\}\to M_2\cup\{a'_1\}$ by imposing $i_1(a_1)=a'_1$. Let $a_2$ be the least integer in $\omega\setminus(M_1\cup M_2\cup\{a_1,a'_1\})$ and repeat the process. The desired automorphism of $\mathbf{F}$ is $i=\bigcup_{t\in\omega}i_t$.
\end{proof}

\begin{defi}{\rm 
Let us fix some notations and terminology. The elements of the model~$\mathbf{F}$ above constructed will be the atoms of our permutation model. Since for each atom $a$ there is a unique triple $s,i,k$ such that $F_s|_{A_i}\cup\{a\}$ is isomorphic to $R_k$, each atom $a$ corresponds to a unique embedding $j_a:F_s|_{A_i}\xhookrightarrow{} R_k$. 
We shall call the domain of the embedding $j_a$ the $\emph{ground}$ of $a$. Furthermore, given two atoms $a$ and $b$, we say that $a<b$ in case $a<_\omega b$ according to the natural ordering. Notice that this well ordering of the atoms does not exist in the permutation model.}
\end{defi}

Let $A$ be the domain of the model $\mathbf{F}$ of the theory $\mathrm{T_n}$. Then the permutation model $\mathrm{MOD_n}$ is built as follows: Consider the normal ideal given by all the finite subsets of $A$ and the group of permutations $G$ defined by$$\pi\in G\iff \forall\,X\in[\omega]^{\mathrm{fin}},\pi(\mathrm{Sel}(X))=\mathrm{Sel}(\pi X).$$

\begin{theorem}
\label{thm:7.2}
$\mathrm{MOD_n}\models\mathrm{nRC_{fin}}$.
\end{theorem}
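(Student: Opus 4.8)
The natural witness for $\mathrm{nRC_{fin}}$ in $\mathrm{MOD_n}$ is the function $\mathrm{Sel}$ itself: by the very definition of $G$, every $\pi\in G$ satisfies $\pi(\mathrm{Sel}(X))=\mathrm{Sel}(\pi X)$, so $\mathrm{Sel}\colon[A]^{>n}\to[A]^n$ is fixed by all of $G$ and hence lies in $\mathrm{MOD_n}$; this already verifies $\mathrm{nRC_{fin}}$ for $x=A$. The plan is to reduce an arbitrary infinite $x$ to this case. So fix an infinite $x\in\mathrm{MOD_n}$ with finite support $E$. If $x$ is well-orderable in $\mathrm{MOD_n}$, then a well-ordering of $x$ (which then lies in the model) lets us send each $s\in[x]^{>n}$ to its $n$ largest elements, so $\mathrm{nRC_{fin}}$ holds trivially with $y=x$. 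Hence I may assume $x$ is not well-orderable, and then a standard argument shows that there is some $z\in x$ which is not supported by $E$.

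The heart of the proof is to manufacture, from such a $z$, a \emph{supported bijection between an infinite subset of $x$ and an infinite set of atoms}, along which $\mathrm{Sel}$ can be transported. Let $S_z$ be a minimal support of $z$, pick an atom $a\in S_z\setminus E$, put $D\coloneqq S_z\setminus\{a\}$, and set
$$f=\{(\varphi z,\varphi a):\varphi\in\mathrm{fix}_G(D)\}.$$
I would first check that $f$ is a well-defined injection: if $\varphi,\psi\in\mathrm{fix}_G(D)$ agree on $z$ then $\tau\coloneqq\psi^{-1}\varphi$ fixes $z$ and $D$, whence (this is the essential point, see below) $\tau a=a$, giving $\varphi a=\psi a$; conversely if $\varphi a=\psi a$ then $\tau$ fixes all of $S_z$ and therefore fixes $z$. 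Thus $f$ is a bijection from $y\coloneqq\mathrm{dom}(f)$, the $\mathrm{fix}_G(D)$-orbit of $z$, onto $y'\coloneqq\mathrm{ran}(f)$, the orbit of $a$. Since $E\subseteq D$, the group $\mathrm{fix}_G(D)$ stabilizes $x$, so $y\subseteq x$; and by the ultrahomogeneity of $\mathbf{F}$ there are infinitely many atoms realizing the type of $a$ over the finite set $D$, so $y'$ (and hence $y$) is infinite. Finally $f$ is supported by $D$ and intertwines the action, i.e.\ $f(\varphi w)=\varphi f(w)$ for every $w\in y$ and $\varphi\in\mathrm{fix}_G(D)$.

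With this bijection in hand, I would define the required selection function by pulling $\mathrm{Sel}$ back through $f$:
$$F\colon[y]^{>n}\to[y]^n,\qquad F(s)=f^{-1}\big(\mathrm{Sel}(\{f(w):w\in s\})\big).$$
Since $f$ is injective, $\{f(w):w\in s\}$ is a finite set of atoms of size $|s|>n$, so $\mathrm{Sel}$ applies and returns an $n$-element subset of it; pulling back yields $F(s)\subseteq s$ with $|F(s)|=n$. It remains to see that $F\in\mathrm{MOD_n}$, and I claim $D$ is a support of $F$: for $\varphi\in\mathrm{fix}_G(D)$ one computes, using the equivariance of $f$ and then the defining property $\varphi(\mathrm{Sel}(X))=\mathrm{Sel}(\varphi X)$ of elements of $G$, that $F(\varphi s)=\varphi(F(s))$, i.e.\ $\varphi$ fixes $F$. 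Thus $F$ witnesses $\mathrm{nRC_{fin}}$ for the infinite subset $y\subseteq x$, completing the reduction.

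The step I expect to be the main obstacle is the \emph{essentiality of $a$} used for the well-definedness of $f$, namely that $\tau\in\mathrm{fix}_G(D)$ with $\tau z=z$ forces $\tau a=a$. Equivalently, this is the statement that $\mathrm{MOD_n}$ has the intersection property for supports (so that minimal supports are unique): were it to fail, $\tau(S_z)=(S_z\setminus\{a\})\cup\{\tau a\}$ would be a second minimal support of $z$, and $D=S_z\cap\tau(S_z)$ would then have to support $z$, contradicting minimality. I would therefore isolate, as a preliminary lemma, that the ultrahomogeneity of $\mathbf{F}$ (the second property listed in the construction of $\mathbf{F}$) guarantees that intersections of supports are supports; once this is available, both the choice of an essential $a$ and the infinitude of the orbit $y'$ follow, and the remainder is the routine equivariance bookkeeping sketched above.
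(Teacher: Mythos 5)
Your overall skeleton is the same as the paper's: verify $\mathrm{nRC_{fin}}$ on sets of atoms via $\mathrm{Sel}$ itself, and for a general infinite $x$ build a supported, equivariant bijection $\varphi z\mapsto\varphi a$ between the $\mathrm{fix}_G(D)$-orbit of some unsupported element $z$ and an infinite orbit of atoms, then pull $\mathrm{Sel}$ back. The equivariance bookkeeping, the infinitude of the orbits, and the transport of $\mathrm{Sel}$ are all fine and match the paper. But there is a genuine gap exactly where you predicted one: the well-definedness of $f$, i.e.\ that $\tau\in\mathrm{fix}_G(D)$ with $\tau z=z$ forces $\tau a=a$. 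You propose to obtain this from a preliminary lemma asserting that intersections of supports are supports, and you claim this lemma follows from the ultrahomogeneity of $\mathbf{F}$. That implication is false in general: the atoms of the second Fraenkel model can be presented as the Fra\"iss\'e limit of finite equivalence relations with classes of size at most two, an ultrahomogeneous structure, and there an atom $a$ with partner $b$ is supported both by $\{a\}$ and by $\{b\}$ while their intersection $\emptyset$ is not a support. So ultrahomogeneity alone cannot deliver the intersection property, and whether it holds in $\mathrm{MOD_n}$ is essentially as hard as the injectivity claim itself.

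The paper's proof of precisely this point is its technical core, and it is model-specific rather than soft: assuming $\sigma(x_0)=\sigma'(x_0)$ but $\sigma(a_0)\neq\sigma'(a_0)$, it produces two infinite sets of atoms $L$ and $R$ (those reachable from $a_0$ by permutations fixing $S\setminus\{a_0\}$ and sending $x_0$ to $x_0$, respectively to $y_0\neq x_0$), picks $p,l\in L$ and $r\in R$ with the same ground $S\setminus\{a_0\}$ and suitable order conditions, and shows that the map fixing $(S\setminus\{a_0\})\cup\{p\}$ and sending $l\mapsto r$ is a $\mathrm{T_n}$-isomorphism extendable to some $\tau'\in\mathrm{fix}_G(\{p\}\cup(S\setminus\{a_0\}))$, which then must both fix $x_0$ and move it to $y_0$, a contradiction. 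The step showing $\gamma$ preserves $\mathrm{Sel}$ on sets containing both $p$ and $l$ uses the \emph{controlled} clause in the construction of $\mathbf{F}$ --- that $\mathrm{Sel}$ picks the $n$ biggest new elements --- which the paper explicitly flags as ``a minor difference which will play an essential role.'' Your proposal never invokes this clause, and without it (i.e.\ with an arbitrary Fra\"iss\'e-style completion of $\mathrm{Sel}$ at each stage) there is no evident reason the injectivity should hold. To repair your argument you would need to replace the intersection-property lemma by an argument of this kind, at which point you have reconstructed the paper's proof.
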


\begin{proof}
Firstly, notice that because for any $m>n$ the function $\mathrm{Sel}$ selects an $n$-element set from each $m$-element
set of atoms, $\mathrm{nRC_{fin}}$ holds in $\mathrm{MOD_n}$ for any infinite set of atoms. So, for an infinite set $X$ in $\mathrm{MOD_n}$, it is enough to construct a bijection between an infinite set of atoms and a subset of $X$\,---\,the function $\mathrm{Sel}$ on the finite sets of atoms will then induce a selection function on the finite subsets of some infinite subset of~$X$.

Let $X$ be an infinite set in $\mathrm{MOD_n}$ with support $S'$. If $X$ is well ordered, the conclusion is trivial, so let $x_0\in X$ be an element not supported by $S'$ and let $S$ be a support of $x_0$ with $S'\subseteq S$. Let $a_0\in S\setminus S'$. If $\mathrm{fix_G}(S\setminus\{a_0\})\subseteq\mathrm{Sym_G}(x_0)$ then $S\setminus\{a_0\}$ is a support of $x_0$, so by iterating the process finitely many times we can assume that there exists a permutation $\tau\in\mathrm{fix_G}(S\setminus\{a_0\})$ such that $\tau(x_0)\neq x_0$. Our conclusion will follow by showing that there is a bijection between an infinite set of atoms and a subset of $X$, namely between $\{\pi(a_0):\pi\in\mathrm{fix_G}(S\setminus\{a_0\})\}$ and $\{\pi(x_0):\pi\in\mathrm{fix_G}(S\setminus\{a_0\})\}$. 

Suppose towards a contradiction that there are two permutations $\sigma,\sigma'\in\mathrm{fix_G}(S\setminus\{a_0\})$ such that $\sigma(x_0)=\sigma'(x_0)$ but $\sigma(a_0)\neq\sigma'(a_0)$. Then, by direct computation, the permutation $\sigma^{-1}\sigma'$ is such that $\sigma^{-1}\sigma'(a_0)\neq a_0$ and $\sigma^{-1}\sigma'(x_0)=x_0$. Let $b=\sigma^{-1}\sigma'(a_0)$. Then $\{b\}\cup (S\setminus\{a_0\})$ is a support of $x$. By construction, the set $\{\pi(a_0):\pi\in\mathrm{fix_G}(\{b\}\cup (S\setminus\{a_0\}))\}$ is infinite, from which we deduce that also the set 
$$
L=\big{\{}a\in A: \exists\,\pi\in\mathrm{fix_G}(S\setminus\{a_0\})\textrm{ such that $\pi(x_0)=x_0$ and $\pi(a_0)=a$}\big{\}}
$$
is infinite. Now, by assumption
there is a permutation $\tau\in\mathrm{fix_G}(S\setminus\{a_0\})$ such that $\tau(x_0)\neq x_0$. Let $y_0:=\tau(x_0)$. Then a standard argument shows that also 
$$
R=\big{\{}a\in A: \exists\,\pi\in\mathrm{fix_G}(S\setminus\{a_0\})\textrm{ such that $\pi(x_0)=y_0$ and $\pi(a_0)=a$}\big{\}}
$$ 
must be infinite.\newline

\noindent First note that in $L$ (and similarly also in $R$) there are infinitely many elements with ground $S\setminus \{a_0\}$. This is because $(S\setminus \{a_0\})\cup \{a_0\}\subseteq \textbf{F}$ is a finite model of $\mathrm{T_n}$ and in the construction of our permutation model we add infinitely many atoms $a_l$ (where from outside, $a_l\in\omega$), such that $(S\setminus \{a_0\})\cup \{a_l\}$ and $(S\setminus \{a_0\})\cup \{a_0\}$ are isomorphic via an isomorphism $\delta$ with $\delta\vert_{S\setminus\{a_0\}}=\operatorname{id}\vert_{S\setminus\{a_0\}}$ and $\delta(a_l)=a_0$. We can extend $\delta$ to an automorphism $\delta\in\mathrm{fix_G}(S\setminus \{a_0\})$. By definition of $L$ we have that $a_l\in L$. \newline

\noindent Let $r\in R$ and $p,l\in L$ all having the same ground $S\setminus \{a_0\}$ such that $r\geq p$, $l\geq p$ and $\min(\{p,q,r\})>\max(S\setminus\{a_0\})$. We want to show that every map
$$
\gamma:(S\setminus \{a_0\})\cup \{p\}\cup\{l\}\to (S\setminus \{a_0\})\cup \{p\}\cup \{r\}
$$
with $\gamma\vert_{(S\setminus \{a_0\})\cup \{p\}}=\operatorname{id}_{(S\setminus \{a_0\})\cup \{p\}}$ and $\gamma(l)=r$ is an isomorphism of $\mathrm{T_n}$-models. Let $X\subseteq (S\setminus \{a_0\})\cup \{p\}\cup\{l\}$. If $\{p,l\}\cap X=\emptyset$ we have that $\gamma(\mathrm{Sel}(X))=\mathrm{Sel}(\gamma(X))$. If $l\in X$ and $p\notin X$ let $\pi_l,\pi_r\in\mathrm{fix_G}(S\setminus\{a_0\})$ with $\pi_l(a_0)=l$ and $\pi_r(a_0)=r$. Then $\pi_r\circ\pi_l^{-1}\vert_X=\gamma\vert_X$. So since $\pi_r\circ\pi_l^{-1}\in G$ we have $\gamma(\mathrm{Sel}(X))=\mathrm{Sel}(\gamma(X))$. In the last case, when $\{p,l\}\subseteq X$, the selection function $n$ biggest elements because of the particular care we took in the construction of the selection function on the set of atoms and since $p,r$ and $l$ have ground $S\setminus \{a_0\}$. So we can extend $\gamma$ to a function $\tau^{\prime}\in\mathrm{fix_G}(\{p\}\cup (S\setminus \{a_0\}))$ with $\tau^{\prime}(l)=r$.\newline

\noindent Let $\pi_r\in\mathrm{fix_G}(S\setminus\{a_0\})$ such that $\pi_r(a_0)=r$ and $\pi_r(x_0)=y_0$. Let $\pi_l\in\mathrm{fix_G}(S\setminus\{a_0\})$ with $\pi_l(a_0)=l$ and $\pi_l(x_0)=x_0$. Then we have that $\pi_r^{-1}\circ\tau^{\prime}\circ\pi_l(a_0)=a_0$ which implies that $\pi_r^{-1}\circ\tau^{\prime}\circ\pi_l(x_0)=x_0$ because the function fixes $S$. So 
\begin{equation}
\label{eq:1}
\tau^{\prime}(x_0)=\tau^{\prime}\circ\pi_{l}(x_0)=\pi_r(x_0)=y_0.
\end{equation}
Now let $\pi_p\in \mathrm{fix_G}(S\setminus\{a_0\})$ with $\pi_p(a_0)=p$ and $\pi_p(x_0)=x_0$. Since $S$ is a support of $x_0$, $\pi_p(S)=\{p\}\cup (S\setminus\{a_0\})$ is also a support of $\pi_p(x_0)=x_0$. Therefore,
$$
\tau^{\prime}(x_0)=x_0.
$$
This is a contradiction to (\ref{eq:1}). So we showed that for all $\sigma,\sigma'\in\mathrm{fix_G}(S\setminus\{a_0\})$, $\sigma(x_0)=\sigma'(x_0)$ implies $\sigma(a_0)=\sigma'(a_0)$, from which we get the desired bijection.


\end{proof}

Due to the following theorem, the class of models $\mathrm{MOD_n}$ will not tell us anything about the horizontal implications in the diagram.

\begin{theorem}
For each $n\in\omega$, $\mathrm{MOD_n}\models\mathrm{ACF^-}$.
\end{theorem}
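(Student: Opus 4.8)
The plan is to derive $\mathrm{ACF^-}$ from the bijection-to-atoms construction already carried out in the proof of Theorem~\ref{thm:7.2}, reducing the general principle to a pure selection problem for finite sets of atoms, which the global function $\mathrm{Sel}$ resolves. First I would reduce to pairwise disjoint families: given an infinite family $\mathcal{F}$ of non-empty finite sets, replace each $A\in\mathcal{F}$ by $A^{*}=A\times\{A\}$, exactly as in the proof of the Lemma for $\mathrm{2RC_{fin}}$, so that a choice function on an infinite subfamily of $\{A^{*}:A\in\mathcal{F}\}$ projects back to one on $\mathcal{F}$. Thus I may assume $\mathcal{F}$ is pairwise disjoint with finite support $E$, and $\bigcup\mathcal{F}$ is infinite. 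Applying to $\bigcup\mathcal{F}$ the construction from the proof of Theorem~\ref{thm:7.2} yields an atom $a_0$, a finite support $S\supseteq E$ with $a_0\in S\setminus E$, and an element $x_0\in\bigcup\mathcal{F}$ for which $\pi(a_0)\mapsto\pi(x_0)$ is an equivariant bijection between the infinite set of atoms $D=\{\pi(a_0):\pi\in\mathrm{fix}_G(S\setminus\{a_0\})\}$ and an infinite subset $X_0=\{\pi(x_0):\pi\in\mathrm{fix}_G(S\setminus\{a_0\})\}$ of $\bigcup\mathcal{F}$.

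Writing $H=\mathrm{fix}_G(S\setminus\{a_0\})$ and letting $A^{*}\in\mathcal{F}$ be the member containing $x_0$ (unique by disjointness), the orbit $\mathcal{A}=\{\pi(A^{*}):\pi\in H\}$ is an infinite subfamily on which both the cardinality $r=|A^{*}|$ and the intersection size $t=|A^{*}\cap X_0|$ are constant, since $X_0$ is $H$-invariant. I would then argue by induction on $r$. If $t<r$, then for every $A\in\mathcal{A}$ the set $A\setminus X_0$ is non-empty of constant size $r-t<r$; because the members of $\mathcal{A}$ are pairwise disjoint the sets $A\setminus X_0$ are pairwise disjoint, hence distinct, so $\{A\setminus X_0:A\in\mathcal{A}\}$ is again an infinite pairwise disjoint family but of strictly smaller cardinality, to which the inductive hypothesis applies; any choice function obtained there lifts to $\mathcal{A}$. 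This leaves the base case $t=r$, in which every member of $\mathcal{A}$ is contained in $X_0$ and therefore, transported through the equivariant bijection, corresponds to an $r$-element set of atoms inside $D$.

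It thus suffices to choose one atom from each set of an infinite, pairwise disjoint family of $r$-element subsets of the atom set $D$, and this last step is where I expect the main obstacle to lie. The decisive feature is that in $\mathrm{MOD_n}$ the function $\mathrm{Sel}$ is defined on \emph{all} finite sets of atoms\,---\,not merely on a single infinite subset, as $\mathrm{nRC_{fin}}$ alone would provide\,---\,and that $D$ sits inside the homogeneous model $\mathbf{F}$. I would first reduce the block size to at most $n$ by replacing each block $B$ with $|B|>n$ by $\mathrm{Sel}(B)\subseteq B$, and then, arguing in the spirit of the positive results of Section~6 but now using the freedom to pass to infinite subfamilies together with the homogeneity of $D$, apply $\mathrm{Sel}$ to suitable unions of blocks in order to isolate a single atom from infinitely many blocks. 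The delicate point, and the part that genuinely uses the particular care taken in defining $\mathrm{Sel}$ on the atoms, is to guarantee that these applications of $\mathrm{Sel}$ return a single atom rather than an entire block; since this must succeed for \emph{every} $n$ (unlike the restricted implications $\mathrm{nRC_{fin}}\Rightarrow\mathrm{nC^-_{fin}}$ of Section~6), it is exactly the globalness of $\mathrm{Sel}$ that I expect to be indispensable, and it is what ultimately forces $\mathrm{ACF^-}$ to hold.
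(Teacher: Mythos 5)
There is a genuine gap, and it sits exactly where you yourself flag it: the last step\,---\,choosing one atom from each member of an infinite pairwise disjoint family of $m$-element sets of atoms, $m\le n$\,---\,is never carried out, and the direction you sketch would not close it. Applying $\mathrm{Sel}$ to ``suitable unions of blocks'' in the spirit of Section~6 is precisely the combinatorics that the paper can only make work for $n\in\{2,3,4,6\}$; more decisively, no argument that uses $\mathrm{Sel}$ merely as an abstract total $n$-selection function can succeed, because a global selection function does not imply partial choice on small sets: in $\mathcal{V}_2$ one has $\mathrm{2RC_{fin}}$ (indeed a selection everywhere relevant) while $\mathrm{C^-_2}$ fails on the family of $2$-blocks, since any finite support leaves some block $A_i$ untouched and the block cycle $\varphi_i$ then moves any purported chosen element. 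So the base case genuinely requires the \emph{specific construction} of $\mathbf{F}$, not just ``globalness'' of $\mathrm{Sel}$. Your sketch also never addresses symmetry: whatever function your procedure produces must have a finite support to exist in $\mathrm{MOD_n}$, and iterated passages to infinite subfamilies yield no evident support.

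What the paper does at this point is different in kind. Given $A_i\in\mathcal{A}$ disjoint from the support $S$ and $a_0\in A_i$, the richness of the Fra\"iss\'e construction (the theory $\mathrm{T_n}$ imposes no coherence between values of $\mathrm{Sel}$ on distinct sets, so every finite configuration is realized along the inductive construction of $\mathbf{F}$) supplies an $(n-1)$-set $R'$ and an atom $r_0$ such that, with $R=R'\cup\{r_0\}$, one has $\mathrm{Sel}(R\cup\{a\})=R$ for all $a\in A_i\setminus\{a_0\}$ but $\mathrm{Sel}(R\cup\{a_0\})=R'\cup\{a_0\}$: the $n$-set $R$ is a ``detector'' singling out $a_0$ inside $A_i$. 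Homogeneity then yields infinitely many atoms $b_0$ whose transposition with $a_0$ extends to an automorphism in $\mathrm{fix_G}(S\cup R)$, hence infinitely many $A_j\in\mathcal{A}$ containing exactly one $a$ with $a\in\mathrm{Sel}(R\cup\{a\})$; choosing that unique $a$ defines a choice function with support $R\cup S$, which is what places it in the model. Your preliminary reductions are correct and even take a slightly different (perfectly acceptable) route from the paper's\,---\,you induct on $|A^{*}|$ using the $H$-invariance of $X_0$, where the paper applies $\mathrm{nRC_{fin}}$ once and intersects the family with the orbit $X$\,---\,but that is the easy half of the theorem; without the detector-set idea and the accompanying support computation, the proof does not go through.
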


\begin{proof}
Fix $n\in\omega$ and let $\mathcal{A}=\{A_i:i\in I\}$ be a family of finite sets. By applying $\mathrm{nRC_{fin}}$ to $\bigcup\mathcal{A}$, it is enough to show that for all $m\leq n$, $\mathrm{C^-_m}$ holds in $\mathrm{MOD_n}$. Fix $m\in\omega$ with $m\leq n$ and suppose $\mathcal{A}=\{A_i:i\in I\}$ is a family of $m$-element sets, and let $P$ be a support of $\mathcal{A}$. If $\bigcup\mathcal{A}$ is well-orderable we are done, so let $x\in\mathcal{A}$ be an element which is not supported by $P$, let $S'$ be a support of $x$ and $a\in S'\setminus P$ an atom such that for some $\pi\in\mathrm{fix}_G(P\cup (S'\setminus \{a\}))$, we have that $\pi(x)\neq x$, as in the previous proof. Set $S=P\cup (S'\setminus \{a\})$ and $X=\{\pi(x):\pi\in\mathrm{fix}_G(S)\}$. Now, we can replace $\mathcal{A}$ with $\{A_i\cap X:i\in I\}$ since a choice function on this last set gives a choice function on the previous $\mathcal{A}$ as well, and let us assume that $\mathcal{A}$ is  family of $m^\prime$-element sets for some $m^\prime\leq m$. As in the proof of Theorem \ref{thm:7.2} we can show that there is a bijection between the infinite set $X$ and the set of atoms $Y:=\{\pi(a)\mid \pi\in \mathrm{fix}_G(S)\}$. So we can without loss of generality assume that $\mathcal{A}$ is a family of $m^\prime$-element subsets of the atoms. Let $A_i\in\mathcal{A}$ with $A_i\cap S=\emptyset$, let $a_0\in A_i$ and let $R^\prime\subseteq A\setminus (S\cup A_i)$ be an $(n-1)$-element set. By construction of the permutation model, we can find an $r_0\in A\setminus (S\cup A_i\cup R^\prime)$ such that
$$
\forall a\in A_i\setminus \{a_0\}~\left(\mathrm{Sel}(R^\prime\cup\{r_0\}\cup \{a\})=R^\prime\cup \{r_0\}\right)
$$
and 
$$
\mathrm{Sel}(R^\prime\cup\{r_0\}\cup\{a_0\})=R^\prime\cup \{a_0\}.
$$
Define $R:=R^\prime\cup\{r_0\}$. Again by construction of the permutation model, we can find infinitely many $b_0\in A$ that behave the same way as $a_0$ with respect to $R\cup S\cup (A_i\setminus\{a_0\})$. In other words, if $\mathrm{repl}$ is the function that replaces $a_0$ by $b_0$, i.e.
\begin{align*}
\mathrm{repl}:A&\to A\\
x&\mapsto \begin{cases}
a_0 &\text{ if } x=b_0;\\
b_0 & \text{ if } x=a_0;\\
x &\text{ otherwise,}
\end{cases}
\end{align*}
we have that for all $X\subseteq R\cup S\cup (A\setminus\{a_i\})$
\begin{equation}
\label{eq:2}
\mathrm{repl}(\mathrm{Sel}(X\cup\{a_0\}))=\mathrm{Sel}(\mathrm{repl}(X\cup\{a_0\})).
\end{equation}
Define
$$
\gamma: S\cup R\cup A_i\to S\cup R\cup (A_i\setminus\{a_0\})\cup\{b_0\}
$$
by $ \gamma:=\mathrm{repl}\vert_{S\cup R\cup A_i}$. With (\ref{eq:2}) we see that $\gamma$ is an isomorphism of $\mathrm{T_n}$-models because for all $X\subseteq R\cup S\cup A_i$
$$
\gamma(\mathrm{Sel}(X))=\mathrm{Sel}(\gamma(X)).
$$
So we can extend $\gamma$ to the whole model $\mathbf{F}$. Since $\gamma\in \mathrm{fix_G}(S\cup R),$ $\gamma(A_i)\in \mathcal{A}$. So there are infinitely many $A_j\in\mathcal{A}$ such that there is exactly one element $a\in A_j$ with $a\in\mathrm{Sel}(R\cup\{a\})$. Choose this element $a$. This gives a choice function with support $R\cup S$. 

\end{proof}

We just mention that fact that all of $\mathrm{C_n}$ and $\mathrm{nC_{fin}}$ for $n\in\omega$ are false in every $\mathrm{MOD_m}$: it is enough to consider the family of all set of atoms of correspondent cardinalities.

\section{Loop}

\subsection{Positive}
In this subsection there is only to notice the straightforward:
\begin{lemma}
\label{lemma}
For all $k,n\in\omega$, $\mathrm{nRC_{fin}}\Rightarrow\mathrm{knRC_{fin}}$.
\end{lemma}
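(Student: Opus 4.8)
The plan is to derive $\mathrm{knRC_{fin}}$ by iterating the $n$-selection function finitely many times. First I would fix an infinite set $x$ and apply $\mathrm{nRC_{fin}}$ to obtain an infinite subset $y\subseteq x$ together with a selection function $f\colon[y]^{>n}\to[y]^n$ satisfying $f(z)\subseteq z$ for every $z$. I claim the same $y$ witnesses $\mathrm{knRC_{fin}}$, so it only remains to manufacture a selection function $g\colon[y]^{>kn}\to[y]^{kn}$ out of $f$.

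The key step is a deterministic peeling procedure. Given $z\in[y]^{>kn}$, set $S_0=\emptyset$ and recursively define $S_{i+1}=f\big(z\setminus\bigcup_{j\le i}S_j\big)$ for $i=0,1,\dots,k-1$, and then put $g(z)=\bigcup_{i=1}^{k}S_i$. Each $S_{i+1}$ is an $n$-element subset of the current remainder, and the $S_i$ are pairwise disjoint by construction, so $g(z)$ is a $kn$-element subset of $z$; thus $g$ has the required type as soon as it is well defined. Crucially, no choices are made beyond those already packaged in $f$, so $g$ is a genuine function definable from $f$ and is available wherever $f$ is. This makes the implication go through verbatim in $\mathrm{ZF}$ (and in particular in any permutation model).

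The only point to verify is that $f$ is in fact applicable at every stage, i.e.\ that the remainder $z\setminus\bigcup_{j\le i}S_j$ always lies in the domain $[y]^{>n}$. After peeling off $i$ blocks we have removed exactly $in$ atoms, so the remainder has $|z|-in$ elements; since $z\in[y]^{>kn}$ gives $|z|\ge kn+1$, for every $i\le k-1$ we get $|z|-in\ge kn+1-(k-1)n=n+1>n$, so $f$ applies and the recursion runs to completion. I expect no genuine obstacle here: the entire content is the inequality $|z|-(k-1)n>n$ guaranteeing the last application of $f$, which is exactly why the statement is straightforward.
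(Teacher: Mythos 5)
Your proof is correct and is exactly the iteration argument the paper has in mind: it states this lemma without proof as ``straightforward,'' and your peeling construction $S_{i+1}=f\bigl(z\setminus\bigcup_{j\le i}S_j\bigr)$, together with the cardinality check $|z|-in\ge n+1$ for $i\le k-1$, spells out precisely the intended routine verification. Nothing is missing; in particular you correctly note that $g$ is definable from $f$ alone, so no further choice is used.
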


\subsection{Negative}
\begin{theorem}
Let $m,n\in\omega$ with $n>m$. For every $n$ which is not a multiple of $m$, 
 $\mathrm{MOD_m}\not\models\mathrm{nRC_{fin}}$.
\end{theorem}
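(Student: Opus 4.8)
The plan is to refute $\mathrm{nRC_{fin}}$ directly on the set of all atoms $A$. Assume towards a contradiction that there are an infinite $y\subseteq A$ and a selection function $f\colon[y]^{>n}\to[y]^n$, and let $S$ be a finite support of both $y$ and $f$. Since $y\subseteq A$ is stabilised setwise by every $\pi\in\mathrm{fix}_G(S)$, the set $y$ is a union of orbits of $\mathrm{fix}_G(S)$ on the atoms; as $S$ is finite there are only finitely many such orbits (the orbit of an atom $a$ is determined by its isomorphism type over $S$, which is fixed by the finitely many values $\mathrm{Sel}(T)$ for $T\subseteq S\cup\{a\}$ with $|T|>m$), so $y$ contains an infinite orbit $O$. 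The whole strategy is to locate inside $O$ a finite $z$ with $|z|>n$ whose setwise stabiliser in $\mathrm{fix}_G(S)$ admits \emph{no} invariant $n$-element subset: for any $\pi\in\mathrm{fix}_G(S)$ with $\pi(z)=z$ one has $\pi(f(z))=f(\pi z)=f(z)$, so $f(z)$ would be exactly such an invariant $n$-subset, which is the desired contradiction.

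The arithmetic obstruction, and the point where the hypothesis $m\nmid n$ enters, will be supplied by a finite model of $\mathrm{T_m}$ designed to have only ``multiple-of-$m$'' symmetric subsets. Concretely, fix $k$ with $mk>n$, let $\Gamma$ be a group of order $m$, put $R=\Gamma\times\{1,\dots,k\}$ (that is, $k$ blocks, each a regular $\Gamma$-orbit), and let $\Gamma$ act diagonally by $g\cdot(h,i)=(gh,i)$. This action is free and its orbits are exactly the $k$ blocks, so every $\Gamma$-invariant subset of $R$ has cardinality a multiple of $m$. To turn $R$ into a model of $\mathrm{T_m}$ on which $\Gamma$ acts by automorphisms I would define $\mathrm{Sel}$ $\Gamma$-equivariantly: for a representative $w$ of each $\Gamma$-orbit of subsets with $|w|>m$, the stabiliser $H=\mathrm{Stab}_\Gamma(w)$ has order $e\mid m$ and acts freely on $w$ with orbits of size $e$, whence $e\mid|w|$ and, since $|w|>m\ge e$, one may take $m/e$ of these $H$-orbits to form an $H$-invariant $m$-subset $\mathrm{Sel}(w)\subseteq w$; putting $\mathrm{Sel}(gw)=g\,\mathrm{Sel}(w)$ is then well defined. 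The resulting $R$ is a finite model of $\mathrm{T_m}$ with $\Gamma\le\mathrm{Aut}(R)$, and since enlarging the automorphism group only shrinks the family of invariant sets, \emph{every} $\mathrm{Aut}(R)$-invariant subset of $R$ still has size divisible by $m$. As $m\nmid n$, $R$ has no invariant $n$-subset.

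It then remains to realise a copy of $R$ inside $O\subseteq y$ so that its $\Gamma$-symmetry is induced by elements of $\mathrm{fix}_G(S)$. Because $R$ is a finite model of $\mathrm{T_m}$ it occurs in $\mathbf{F}$, and because all atoms of $O$ share a single type over $S$ I would use the homogeneity of $\mathbf{F}$ (any isomorphism between finite submodels extends to an automorphism) to embed $R$ as a configuration $z\subseteq O$ that is \emph{free over} $S$, i.e.\ such that the type over $S$ of every subtuple of $z$ depends only on its internal $\mathrm{Sel}$-structure. For such $z$ the automorphisms of $\mathbf{F}|_z$ preserve all types over $S$ and hence extend to elements of $\mathrm{fix}_G(S)$; in particular the diagonal $\Gamma$-action lives inside $\mathrm{fix}_G(S)$, each such $\pi$ satisfies $\pi(z)=z$ and $\pi(y)=y$, and the mechanism of the first paragraph produces the contradiction.

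The main obstacle is precisely this last step: guaranteeing that the highly symmetric model $R$ can be planted \emph{freely over the finite support $S$} among atoms of one type, so that its internal symmetries survive as automorphisms fixing $S$. Equivalently, one must show that the homogeneous structure induced on an infinite orbit $O$ over $S$ still has $R$ in its age, and that no type over $S$ realised inside $y$ is so rigid (e.g.\ the order-encoding ``top-$m$'' type arising from the careful construction in Section 7) that it destroys the $\Gamma$-symmetry. Here the amalgamation property underlying the construction of $\mathbf{F}$ should furnish the free amalgam of $S$ with the $\Gamma$-symmetric $R$, but this freeness is the delicate content of the proof. By contrast, the construction of $R$ and the counting of its invariant subsets in the second paragraph are routine once the divisibility idea is isolated.
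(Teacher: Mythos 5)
Your proposal is correct, and its skeleton coincides with the paper's: refute $\mathrm{nRC_{fin}}$ on the set of atoms, fix a support $S$ of the selection function, plant inside the atoms a finite $\mathrm{T_m}$-model carrying a symmetry group all of whose invariant subsets have cardinality divisible by $m$ (hence none of size $n$, as $m\nmid n$), extend each symmetry to some $\pi\in\mathrm{fix_G}(S)$ fixing the configuration $z$ setwise, and derive the contradiction from $\pi(f(z))=f(\pi z)=f(z)$. Where you genuinely diverge is in the key lemma. The paper takes the cyclic case $\Gamma=\langle\widetilde{\pi}\rangle$ with $\widetilde{\pi}$ a product of $k$ disjoint $m$-cycles (exactly your $R$ with $\Gamma=\mathbb{Z}/m\mathbb{Z}$) and verifies by a long explicit case analysis --- how $L$ meets the blocks, divisibility of the intersection sizes $a_j$, the $\gcd/\mathrm{lcm}$ computation, the construction of the auxiliary set $F$ --- that $\mathrm{Sel}$ can be defined orbit by orbit so that $\widetilde{\pi}$ becomes an automorphism. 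Your orbit-representative argument (since $\Gamma$ acts freely, $H=\mathrm{Stab}_\Gamma(w)$ acts freely on $w$, so $e=|H|$ divides both $|w|$ and $m$, and a union of $m/e$ of the $H$-orbits is an $H$-invariant $m$-subset, extended equivariantly along the $\Gamma$-orbit of $w$) compresses those pages into a few lines and works for an arbitrary group of order $m$ acting freely; it isolates precisely the divisibility mechanism the paper extracts by hand, at the cost of being less concrete about the embedding data (the paper's conditions 1--3). As for the step you flag as delicate: it is exactly the paper's condition 2 (``$\mathrm{Sel}(Z)$ can be fixed arbitrarily for $Z\subseteq S\cup N$ meeting both''), which the paper likewise asserts without proof, and it does go through, since $\mathrm{T_m}$ imposes no coherence between values of $\mathrm{Sel}$ on distinct sets: the free amalgam of $\mathbf{F}|_S$ with your $\Gamma$-symmetric $R$ lies in the age and embeds over $S$ by the extension property of $\mathbf{F}$, where for a mixed set $Z$ with stabiliser $H\leq\Gamma$ an $H$-invariant $m$-subset always exists by taking $H$-orbits inside $Z\cap R$ (each of size $e$) padded with pointwise-fixed elements of $Z\cap S$, using $|Z|>m$; the ``top-$m$'' default rule of Section~7 is harmless because it only fills in values not forced during the enumeration of extensions. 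Finally, note that your reduction to an infinite $\mathrm{fix_G}(S)$-orbit $O\subseteq y$ actually repairs a point the paper glosses over: its proof never checks that the configuration $N$ lies inside the given infinite set on which $f$ is defined, which is needed for $f(N)$ to make sense, so on that point your version is the more careful one.
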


\begin{proof}
Consider the set of the atoms and suppose that there is an infinite subset $A$ with a function $f$ which selects $n$ elements from every finite and large enough subset of $A$. Let $S$ be a support of $f$. Let $M$ be any model of the theory $\mathrm{T}_m$ with cardinality $|M|=mk$ for $k\in\omega$ such that $m(k-1)<n<mk$. Then it is possible to find an $mk$-element subset $N=\{x_1,\dots,x_{mk}\}\subseteq \omega$ such that:
\begin{enumerate}
    \item $N$ and $M$ are isomorphic as models of $\mathrm{T}_m$;
    \item $\mathrm{Sel}(Z)$ can be fixed arbitrarily whenever $Z\subseteq S\cup N$ with $\lvert Z\cap S\rvert\geq 1$ and $\lvert Z\cap N\rvert\geq 1$;
    \item $\mathrm{Sel}(\{x_{im+1},\dots,x_{mk}\})=\{x_{im+1},\dots,
    x_{(i+1)m}\}$ holds for all $i< k$.
\end{enumerate} 
Notice that that condition 3 is only a matter of reordering. Consider the following permutation of $N$, written as a finite product of finite cycles:
$$
\widetilde{\pi}=\prod_{i< k}(x_{im+1},x_{im+2},\dots,x_{(i+1)m}).
$$
Our conclusion will follow by showing that there is a model $M$ of $\mathrm{T_m}$, a corresponding subset $N\subseteq C$ and a permutation $\pi\in\mathrm{fix_G}(S)$ such that $\pi$ acts on $N$ exactly as $\widetilde{\pi}$ on $M$. First we want to find a $\mathrm{T_m}$-model $M=\{x_1,\dots, x_{mk}\}$ such that $M$ and $\widetilde{\pi}M$ are isomorphic as $\mathrm{T_m}$-models. Naturally we first impose condition 3, namely for all $i\leq k$
$$
\mathrm{Sel}(\{x_{im+1},x_{im+2},\dots,x_{mk}\})=\{x_{im+1},x_{im+2},\dots,x_{(i+1)m}\}.
$$
The main ide of the proof is the following: Let $L$ be a subset of $M$ with $|L|>m$ and $L\neq\{x_{im+1},x_{im+2},\dots,x_{mk}\}$ for every $i\in k$. Consider the orbit $\{\widetilde{\pi}^lL:l\in\omega\}$. Now we choose an $m$-element subset $L^\prime\subseteq L$ and define $\mathrm{Sel}(L):=L^\prime$. Extend this choice to the whole orbit by defining 
$$
\mathrm{Sel}(\widetilde{\pi}^lL):=\widetilde{\pi}^l(\mathrm{Sel}(L)).
$$
The choice of $\mathrm{Sel}(L)$ has to be suitable in the sense that $\widetilde{\pi}^jL=L$ must imply $\widetilde{\pi}^j(\mathrm{Sel}(L))=\mathrm{Sel}(L)$. 
\begin{itemize}
    \item First of all assume that for some $I\subseteq k$, 
    $$
    |L\cap(\bigcup_{i\in I}\{x_{im+1},\dots,x_{(i+1)m}\})|=m.
    $$ 
    Then a suitable choice for $\mathrm{Sel}(L)$ is given by $\bigcup_{i\in I}\{x_{im+1},\dots,x_{(i+1)m}\}\cap L$.
    \item Otherwise, let $J\subseteq k$ be the set of indices $j$ such that $\widetilde{\pi}^s$ fixes 
    $$
    L\cap\{x_{jm+1},\dots,x_{(j+1)m}\}
    $$ 
    only if $s$ is a multiple of $m$. If $|L\setminus\bigcup_{j\in J}\{x_{jm+1},\dots,x_{(j+1)m}\}|\leq m$, then a suitable choice for $\mathrm{Sel}(L)$ is given by any $m$-element subset of $L$ which includes $L\setminus\bigcup_{j\in J}\{x_{jm+1},\dots,x_{(j+1)m}\}$. 
    \item Let $J\subseteq k$ be as above and suppose that $m <|L\setminus\bigcup_{j\in J}\{x_{im+1},\dots,x_{(i+1)m}\}|$. By replacing $L$ by $L\setminus\bigcup_{j\in J} \{x_{im+1},\dots, x_{(i+1)m}\}$ we can assume that for each $i < k$ there exists a $1< s< m$ such that $\widetilde{\pi}^s$ fixes $L\cap\{x_{im+1},\dots,x_{(i+1)m}\}$. Our goal is now to get rid of the case in which, for some $i< k$, $0\neq|L\cap\{x_{im+1},\dots,x_{(i+1)m}\}|\nmid m$. Fix such an $i'< k$ and let $s'\in\omega$ be the least integer greater than $1$ for which $\widetilde{\pi}^{s'}$ fixes $L\cap\{x_{i'm+1},\dots,x_{(i'+1)m}\}$. Then the cardinality $|L\cap\{x_{i'm+1},\dots,x_{(i'+1)m}\}|$ must be a multiple of $\frac{m}{s'}$. Indeed, $\frac{m}{s'}$ is the cardinality of each orbit 
    $$
    \{(\widetilde{\pi}^{s'})^s(x):x\in L\cap\{x_{i'm+1},\dots,x_{(i'+1)m}\}\land s\in\omega\}.
    $$ 
    In the next step we can consider each of these orbits as different subsets of the form $L\cap \{x_{im+1},\dots, x_{(i+1)m}\}$. So we can without loss of generality assume that  $\lvert L\cap \{x_{im+1},\dots, x_{(i+1)m}\}\rvert$ divides $m$ for all $i<k$ and that $\widetilde{\pi}^s$ fixes $L\cap \{x_{im+1},\dots, x_{(i+1)m}\}$ for some $1<s<m$.
    
    \item Finally choose $K\subseteq k$ such that Let finally $J\subseteq k$ be such that
    \begin{enumerate}
        \item $|L\cap(\bigcup_{j\in K}\{x_{jm+1},\dots,x_{(j+1)m}\})|\geq m$ is minimal and 
        \item $|L\cap\{x_{jm+1},\dots,x_{(j+1)m}\}|\mid m$ for all $j\in K$.
    \end{enumerate}
Replace $L$ by $L\cap\left(\bigcup_{j\in K}\{x_{jm+1},\dots,x_{(j+1)m}\}\right)$. Set $a_j=|L\cap\{x_{jm+1},\dots,x_{(j+1)m}\}|$ for each $j\in K$. By writing $\sum_{j\in K}{a_j}=m+(|L|-m)$, we can see that $\gcd_{j\in K}(a_j)\mid (|L|-m)$. Now, notice that in order for a power $\widetilde{\pi}^s$ to fix $L\cap\{x_{jm+1},\dots,x_{(j+1)m}\}$ for some $j\in K$, $s$ has to be a multiple of $\frac{m}{a_j}$. It follows that, in order for a power $\widetilde{\pi}^s$ to fix $L$, $s$ has to be a multiple of $m'=\mathrm{lcm}_{j\in K}(\frac{m}{a_j})=\frac{m}{\gcd_{j\in K}(a_j)}$. Summarizing:
\begin{enumerate}
    \item $\gcd_{j\in K}(a_j)\mid (|L|-m)$.
    \item $\widetilde{\pi}^s$ fixes $L$ if and only if $s$ is a multiple of $m'=\frac{m}{\gcd_{j\in K}(a_j)}$.
    \item $|L|-m<a_j$, for all $j\in K$.
\end{enumerate}
Fix a $j\in K$. The conclusion will follow by finding an $F\subseteq L\cap\{x_{jm+1},\dots,x_{(j+1)m}\}$ of cardinality $|L|-m$ such that whenever some $\widetilde{\pi}^s$ fixes $L$, then $\widetilde{\pi}^s$ fixes $F$ as well. We can find such a set $F$ through the following procedure: Start with $F=\emptyset$. Let $x\in (L\cap\{x_{jm+1},\dots,x_{(j+1)m}\})\setminus F$, and replace $F$ by $F\cup\{(\widetilde{\pi}^{m'})^t(x):t\in\omega\}$, noticing that the cardinality of the orbit is exactly $\gcd_{j\in K}(a_j)$. If $|F|=|L|-m$ we are done, otherwise repeat the procedure with some $y\in (L\cap\{x_{jm+1},\dots,x_{(j+1)m}\})\setminus F$. After a finite number of repetitions we get $\lvert F\rvert=\lvert L\rvert-m$.
\end{itemize}
Now we can show that $S$ is not a support of the selection function $f$ we chose at the beginning of the proof. Let $M$ be the $\mathrm{T_m}$-model we constructed above that satisfies $\widetilde{\pi}M=M$. Let $N\subseteq \omega$ be a $\mathrm{T_m}$-model that is isomorphic to $M$ and satisfies conditions 1,2 and 3. The proof above shows that $\pi(\mathrm{Sel}(L))=\mathrm{Sel}(\pi(L))$ for all $L\subseteq N$. Moreover, condition 2 says that $N$ can even be chosen such that  $\pi(\mathrm{Sel}(L))=\mathrm{Sel}(\pi(L))$ for all $L\subseteq N\cup S$. So $\pi$ can be extended to a function $\pi\in \mathrm{fix_G}(S)$ on the whole model $\mathbf{F}$. Note that for all $n$-element subsets of $N$ we have that $\pi(N)\not = N$. So $S$ is indeed not a support of the selection function $f$. This is a contradiction.
\end{proof}

\section{Vertical Downward}

\subsection{Positive}
As an immediate consequence of Lemma \ref{lemma}, we get the following.
\begin{lemma}
\label{lem:9.1}
For all $k,n\in\omega$, $\mathrm{nRC_{fin}}\Rightarrow\mathrm{RC}_{kn+1}$.
\end{lemma}

It is interesting to notice that Lemma \ref{lem:9.1} and the next theorem are here proven using qualitatively the same approach. Despite this fact, the forthcoming proof is more complex than the other.

\begin{theorem}
\label{thm:9.2}
$\mathrm{4RC_{fin}}\Rightarrow\mathrm{RC_7}$.
\end{theorem}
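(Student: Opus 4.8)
The plan is to prove $\mathrm{4RC_{fin}}\Rightarrow\mathrm{RC_7}$ by starting, as in Lemma \ref{lem:9.1}, from an infinite set $x$, applying $\mathrm{4RC_{fin}}$ to obtain an infinite $y\subseteq x$ together with a selection function $g\colon[y]^{>4}\to[y]^4$, and then manufacturing from $g$ a choice function on $[y']^7$ for some infinite $y'\subseteq y$. The naive reduction used for Lemma \ref{lem:9.1} (where $kn+1$ is coprime to the selected block size $n$ in a convenient way) does not directly apply because $7$ is not of the form $4k+1$ matched to the residues we can control; hence the extra complexity the theorem warns us about. The basic mechanism I would exploit is that given a $7$-element set $Z$, applying $g$ to $Z$ returns a $4$-subset, and applying $g$ to the complementary configurations and to unions of $Z$ with auxiliary sets yields a system of $4$-subsets whose Boolean combinations can, with care, isolate a single element of $Z$.

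Concretely, I would first reduce to the case of an infinite family $\mathcal{A}=\{A_j:j\in J\}$ of pairwise disjoint $7$-element sets (the standard replacement trick of replacing $A_i$ by $A_i^*\colon A_i\to\{A_i\}$ makes them disjoint, and passing to $[y']^7$ is equivalent to having a partial choice function on such a family after using $\mathrm{4RC_{fin}}$ to thin out $x$). For each pair $(i,j)$ I would examine $g(A_i\cup A_j)$, a $4$-subset of the $14$-element union, and record how its trace distributes across $A_i$ and $A_j$. Following the graph-theoretic bookkeeping of Theorem 6.3, I would define a directed graph on $J$ where an edge $(i,j)$ records that $g$ selects strictly fewer than all of $A_j$ when $A_j$ is paired with $A_i$, so that an edge lets us select one element from $A_j$. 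The key counting lemma, exactly as in Theorem 6.3, is that infinite outdegree immediately solves the problem, and that for each fixed $k$ only finitely many vertices can have outdegree $k$ (via the inequality $\binom{|\widetilde{I}|}{2}\le |\widetilde{I}|k'$), yielding a well-ordered partition $J=\bigsqcup_k J_k$ into finite classes indexed by outdegree.

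I would then apply $\mathrm{4RC_{fin}}$ to the index set $J$ itself, extracting at most $4$ elements $f(J_k)$ from each finite class $J_k$, and — after thinning so that the number $m\in\{1,2,3,4\}$ of extracted representatives is constant across infinitely many classes — analyze the cases $m=1,2,3,4$. As in the preceding theorems, small $m$ is handled by using the recorded inner edges of $f(J_k)$ to pin down a choice, while $m=4$ is resolved by applying $g$ once more to $\bigcup_{i\in f(J_k)}A_i$ and checking whether it selects exactly one element from each block or fewer, the latter reducing to an earlier case. The delicate point, and what I expect to be the main obstacle, is that with $7$-element sets rather than $3$- or $4$-element ones the combinatorics of how a $4$-subset of $A_i\cup A_j$ can split is substantially richer, so the degenerate configurations (for instance when $g$ consistently returns a balanced or symmetric trace that refuses to break ties) multiply; controlling these will likely require a second auxiliary extraction and a more careful symmetry-breaking argument analogous to the $\widetilde f$ and $\mathrm{sel}$ machinery deployed in the proof of Theorem 6.4, rather than the single graph used for $\mathrm{C_3^-}$.
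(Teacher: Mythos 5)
Your proposal rests on a reduction that is false: you claim that verifying $\mathrm{RC_7}$ on $[y']^7$ ``is equivalent to having a partial choice function'' on an infinite family of pairwise disjoint $7$-element sets. It is not. $\mathrm{RC_7}$ demands a \emph{total} choice function on \emph{all} $7$-element subsets of some infinite $y'$, and these subsets overlap heavily; they cannot be disjointified, and no thinning of the family is permitted (only thinning of the ambient set $y$ is allowed, after which the function must still be defined on every $7$-subset of what remains). The graph-theoretic machinery of Theorems 6.3 and 6.4 that you invoke --- outdegree classes, extraction of representatives $f(J_k)$, passing to infinite subfamilies at several stages --- is built precisely for $\mathrm{C^-_{fin}}$-type conclusions, where one is free to discard all but an infinite subfamily and to select from the survivors. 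Every one of those thinning steps destroys the ``all $7$-subsets'' requirement, so your argument can at best yield something like $\mathrm{C_7^-}$ restricted to a subfamily, which is strictly weaker than $\mathrm{RC_7}$. This is not a repairable technicality: the entire strategy of comparing distinct blocks $A_i, A_j$ via $g(A_i\cup A_j)$ presupposes a disjoint indexed family, which $[y']^7$ simply is not.

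The paper's proof is structurally different and entirely \emph{local}: given the selection function $\widetilde f\colon[B]^{>4}\to[B]^4$, it fixes an arbitrary $7$-element set $S=\{x,y,z,a,b,c,d\}$ with $\widetilde f(S)=\{a,b,c,d\}$ and defines auxiliary maps $f(T)=T\setminus\widetilde f(T)$ and $g(T)=f(S\setminus T)$ using only values of $\widetilde f$ on subsets of $S$ itself. A finite case analysis of the possible configurations $\{g(a),g(b),g(c),g(d)\}$ shows that in every configuration not immediately yielding a canonical element, the permutations of $\{a,b,c,d\}$ preserving $g$ are confined to a small subgroup from which a particular double transposition can be canonically selected; that transposition determines two pairs, and the element $g(g(a,b)\cup g(c,d))$ is then uniquely determined. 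Because every step uses only $\widetilde f$ on subsets of $S$, the resulting choice is uniform in $S$ and hence defines a genuine choice function on all of $[B]^7$ at once --- exactly the property your family-based approach cannot deliver. If you want to salvage your draft, the part worth keeping is the observation that $7$ is not of the form $4k+1$ so Lemma \ref{lem:9.1} does not apply; the fix is to abandon the disjoint-family bookkeeping and look for a symmetry-breaking argument internal to each $7$-set.
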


\begin{proof}
Let $A$ be an infinite set and apply $\mathrm{4RC_{fin}}$ to get an infinite subset $B\subseteq A$ with a function $\widetilde{f}\colon[B]^{>4}_\mathrm{fin}\to[B]^4$. Let $S$ be a $7$-element subset of $x$. In this proof we are going to consider all the possible ways in which the function $\widetilde{f}$ can act on the subsets of $S$ in order to show that it is always possible to choose a particular element of $S$, and hence to verify $\mathrm{RC_7}$. Though making use of symmetries in a few passages, it will substantially be a case-by-case analysis. Let $S=\{x,y,z,a,b,c,d\}$ with $\widetilde{f}(S)=\{a,b,c,d\}$. To simplify the notation, define the two functions
\begin{itemize}
    \item $f\colon[S]^{>4}\to\mathscr{P}(S)$ given by $f\colon T\mapsto T\setminus\widetilde{f}(T)$;
    \item $g\colon[S]^{<3}\to [S]^{<3}$ given by $g\colon T\mapsto f(S\setminus T)$.
\end{itemize}
For simplicity we will write, for instance, $g(a)$ instead of $g(\{a\})$.
We can assume that for all $l\in \widetilde{f}(S)=\{a,b,c,d\}$ we have that $g(l)\cap \{x,y,z\}=\emptyset$. Otherwise, there is a natural way to choose an element from $S$. Now we build, step by step, all the possibilities for $\{g(a),g(b),g(c),g(d)\}$ which do not allow us to immediately choose an element from $S$.
\begin{enumerate}
    \item By symmetry, we can fix $g(d)=f(x,y,z,a,b,c)=\{a,b\}$.
    \item There are now only two non-equivalent cases:\begin{enumerate}
        \item $g(d)=\{a,b\}$ and $g(c)=\{a,b\}$;
        \item $g(d)=\{a,b\}$ and $g(c)=\{a,d\}$, which is equivalent to the third possible choice $g(c)=\{b,d\}$.
    \end{enumerate}
    \item The two cases branch now in five:
    \begin{enumerate}
        \item $g(d)=\{a,b\}$, $g(c)=\{a,b\}$ and $g(b)=\{a,c\}$. This is symmetric to  which is symmetric to $g(d)=\{a,b\}$, $g(c)=\{a,b\}$ and $g(b)=\{a,d\}$.
        \item $g(d)=\{a,b\}$, $g(c)=\{a,b\}$ and $g(b)=\{c,d\}$.
        \item $g(d)=\{a,b\}$, $g(c)=\{a,d\}$ and $g(b)=\{a,c\}$.
        \item $g(d)=\{a,b\}$, $g(c)=\{a,d\}$ and $g(b)=\{c,d\}$.
        \item $g(d)=\{a,b\}$, $g(c)=\{a,d\}$ and $g(b)=\{a,d\}$.
    \end{enumerate}
\end{enumerate}
Notice how the option 3.c can be ignored, since it allows us to choose $a$ in $S$ independently from $g(a)$. With similar arguments we can show that the only four non-symmetric choices for $g(a)$ in which we cannot immediately choose an element from $S$ are:
\begin{enumerate}
    \item $g(d)=\{a,b\}$, $g(c)=\{a,b\}$, $g(b)=\{a,c\}$ and $g(a)=\{b,d\}$;
    \item $g(d)=\{a,b\}$, $g(c)=\{a,b\}$, $g(b)=\{c,d\}$ and $g(a)=\{c,d\}$;
    \item $g(d)=\{a,b\}$, $g(c)=\{a,d\}$, $g(b)=\{c,d\}$ and $g(a)=\{b,c\}$;
    \item $g(d)=\{a,b\}$, $g(c)=\{a,d\}$, $g(b)=\{a,d\}$ and $g(a)=\{c,d\}$.
\end{enumerate}
For each of the above cases we can check that the only permutations on $\{a,b,c,d\}$ that preserve $g$ are given by
\begin{enumerate}
    \item $(a,b)(c,d)$;
    \item $(a,b)$, $(c,d)$, $(a,b)(c,d)$, $(a,c)(b,d)$, $(a,d)(b,c)$;
    \item $(a,c)(b,d)$;
    \item $(a,d)(b,c)$.
\end{enumerate}
In each of these cases, it is possible to select a particular double transposition (in case 2, pick $(a,b)(c,d)$). Last, consider how $g$ acts on the six distinct pairs included in $\{a,b,c,d\}$. A double transposition selects exactly two of these pairs: for instance $(a,b)(c,d)$ selects $\{a,b\}$ and $\{c,d\}$. We conclude the proof by considering the uniquely determined $g(g(a,b)\cup g(c,d))$.
\end{proof}

\begin{corollary}
$\mathrm{4RC_{fin}}$ implies $\mathrm{RC_n}$ whenever $n$ is odd and greater than $4$.
\end{corollary}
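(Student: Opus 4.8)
The plan is to split every odd $n>4$ according to its residue modulo $4$, observing that such an $n$ satisfies either $n\equiv 1$ or $n\equiv 3\pmod 4$, and to treat the two classes with the two tools already established. If $n\equiv 1\pmod 4$, I would write $n=4k+1$ with $k\geq 1$; then Lemma~\ref{lem:9.1}, applied with base $4$ and multiplier $k$, gives directly $\mathrm{4RC_{fin}}\Rightarrow\mathrm{RC}_{4k+1}=\mathrm{RC}_n$, so in this case nothing further is needed.

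For $n\equiv 3\pmod 4$ I would write $n=4j+7$ with $j\geq 0$ and combine a bulk selection step with the choice function on $7$-element sets coming from Theorem~\ref{thm:9.2}. Concretely, I would first use Lemma~\ref{lemma} to obtain $\mathrm{4RC_{fin}}\Rightarrow\mathrm{4jRC_{fin}}$, and Theorem~\ref{thm:9.2} to obtain $\mathrm{4RC_{fin}}\Rightarrow\mathrm{RC_7}$. Given an infinite set $x$, I apply $\mathrm{4jRC_{fin}}$ to get an infinite $y\subseteq x$ with a selection function $s$ choosing a $4j$-element subset from each finite subset of $y$ of size greater than $4j$, and then apply $\mathrm{RC_7}$ to $y$ to obtain an infinite $y'\subseteq y$ with a choice function $c$ on $[y']^7$. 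For $S\in[y']^n$ the set $R:=S\setminus s(S)$ is well defined and has exactly $7$ elements, all lying in $y'$; declaring $c(R)\in R\subseteq S$ to be the chosen element of $S$ yields a choice function on $[y']^n$, which witnesses $\mathrm{RC}_n$. The degenerate case $j=0$ is simply $\mathrm{RC_7}$ and so recovers $n=7$.

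The genuinely new content is entirely contained in Theorem~\ref{thm:9.2}, so the corollary is essentially a packaging argument, and the only point I would treat with care is that the selection function $s$ and the choice function $c$ must be defined over a common infinite set. This is arranged by nesting the two applications: since $\mathrm{RC_7}$ is applied to $y$ rather than to $x$, one has $y'\subseteq y$, and therefore $s$ is still defined on every sufficiently large finite subset of $y'$ and composes with $c$ exactly as above. Finally, the union of the two residue cases is precisely the set of odd integers greater than $4$, which completes the argument.
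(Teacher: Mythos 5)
Your proof is correct and follows essentially the same route as the paper: split odd $n>4$ into $n\equiv 1\pmod 4$ (handled by Lemma~\ref{lem:9.1}) and $n\equiv 3\pmod 4$ (reduce an $n$-set to a $7$-set and invoke Theorem~\ref{thm:9.2}). The only cosmetic difference is that you strip off $4j$ elements in one step via Lemma~\ref{lemma} and nest a second application of $\mathrm{RC_7}$ on $y'\subseteq y$, whereas the paper iterates the single selection function $f$ exactly $k$ times inside one $y$ and applies the $\mathrm{RC_7}$ choice to $z_0\cup f(z)$; both compositions are valid.
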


\begin{proof}
We have that either $n=1+4k$ or $n=3+4k$ for a $k\in\omega$. The first case follows directly by Lemma \ref{lem:9.1}. In the second case let $x$ be an infinite set and apply $\mathrm{4RC_{fin}}$ to get an infinite subset $y\subseteq x$ with a selection function $f:[y]^{>4}_{\mathrm{fin}}\to [y]^4$. Let $z\subseteq y$ be an $n$-element subset. Apply $f$ exactly $k$ times to find a $3$-element subset $z_0$ of $z$. Then $\lvert z_0\cup f(z)\rvert=7$ and we can use Theorem \ref{thm:9.2}.
\end{proof}

\subsection{Negative}

\begin{theorem}
Let\/ $m,n\in\omega\setminus \{0\}$. Then\/ $\mathrm{MOD_m}\not\models\mathrm{RC_n}$ whenever for some prime $p$ divisor of $m$, $n\not\equiv 1\Mod{p}$.
\end{theorem}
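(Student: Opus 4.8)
The plan is to argue by contradiction, assuming $\mathrm{MOD_m}\models\mathrm{RC_n}$ and producing a single bad instance on the set of all atoms. Applying $\mathrm{RC_n}$ to the (infinite) set of atoms yields an infinite set of atoms $y$ together with a choice function $f\colon[y]^n\to y$; let $S$ be a finite support of both $y$ and $f$. Exactly as in the proofs of Theorem \ref{thm:7.2} and of the theorem in Section 8, everything reduces to finding one $n$-element set $N\subseteq y$ with $N\cap S=\emptyset$ and one permutation $\pi\in\mathrm{fix_G}(S)$ such that $\pi N=N$ and $\pi$ has \emph{no} fixed point on $N$. Indeed, since $S$ supports $f$ we then have $f(N)=f(\pi N)=\pi(f(N))$, so $f(N)\in N$ would be a fixed point of $\pi$ in $N$, a contradiction. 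Note the hypothesis forces $n\ge 2$: for $n=1$ we would have $n\equiv 1\Mod p$ for every prime $p\mid m$, so nothing is to be proven.

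The arithmetic core is the construction of $\pi$ on $N$. Fix a prime $p\mid m$ with $r:=n\bmod p\neq 1$ and prescribe the cycle type of $\pi$ as follows: if $r=0$ let $\pi$ be a product of $p$-cycles, and if $2\le r\le p-1$ let $\pi$ be the product of one $r$-cycle and $(n-r)/p$ cycles of length $p$. In either case every cycle has length $\ge 2$, so $\pi$ is fixed-point-free. The real work is to equip $N$ with a $\mathrm{T_m}$-selection function making $\pi$ an automorphism; by the standard orbit construction it suffices, for every $L\subseteq N$ with $|L|>m$, to exhibit an $m$-element subset of $L$ invariant under $\mathrm{Stab}_{\langle\pi\rangle}(L)=\langle\pi^d\rangle$, and then to propagate it along the $\langle\pi\rangle$-orbit of $L$. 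I would split on $p\nmid d$ and $p\mid d$. Since $p$ is prime, a rotation $\pi^d$ with $p\nmid d$ acts on each $p$-cycle as a $p$-cycle, which has no proper nonempty invariant subset; hence $L$ meets each $p$-cycle in $\emptyset$ or the whole cycle, while $|L\cap R|\le r<p$, and the inequality $|L|>m$ with $m$ a multiple of $p$ forces at least $m/p$ full $p$-cycles inside $L$, giving the invariant $m$-set. When $p\mid d$ the elements of the $p$-cycles are fixed by $\pi^d$ and thus supply "free" points which can be combined with the at most $r<p\le m$ points of $L\cap R$; again $|L|>m$ lets one hit exactly $m$. This is precisely where $p\mid m$ and $n\not\equiv 1\Mod p$ enter, and it rests on the same elementary divisibility bookkeeping for prime powers used in Sections 5 and 6. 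The extension to straddling sets $L\subseteq S\cup N$ is easier, since every point of $S$ is $\pi$-fixed and can be used to fine-tune the cardinality of the invariant set.

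This produces an abstract finite $\mathrm{T_m}$-model on $S\cup N$ extending $S$ and admitting the fixed-point-free automorphism $\pi$ that fixes $S$ pointwise. I would then realize it inside $y$ exactly as in Section 8: after passing to an infinite subset (there being only finitely many types of a single atom over $S$), we may assume the atoms of $y$ all have the same type over $S$, and the homogeneity of $\mathbf F$ together with its extension property (the second clause of the Fra\"iss\'e-limit theorem of Section 7 and the realization technique of Theorem \ref{thm:7.2}) yields a copy $N\subseteq y$ realizing this structure and an extension of the automorphism to some $\pi\in\mathrm{fix_G}(S)$. With $\pi N=N$ and $\pi$ fixed-point-free on $N$, the contradiction of the first paragraph closes the argument.

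The hard part will be the compatibility step of the second paragraph: guaranteeing that the fixed-point-free $\pi$ we chose genuinely extends to an automorphism of a $\mathrm{T_m}$-model, i.e.\ that \emph{every} large subset has a stabilizer-invariant $m$-subset. The primality of $p$ (controlling invariant subsets of $p$-cycles) and the divisibility $p\mid m$ are exactly what make the two cases $p\nmid d$ and $p\mid d$ succeed, and the delicate bookkeeping is the interaction of the single $r$-cycle with the $p$-cycles. A secondary point to watch is securing $N\subseteq y$ rather than merely $N\subseteq\mathbf F$, but this is handled by the realization machinery already developed for Theorem \ref{thm:7.2} and Section 8.
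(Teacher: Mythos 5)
Your proposal is correct and takes essentially the same route as the paper: the same reduction to a fixed-point-free $\pi\in\mathrm{fix_G}(S)$ preserving an $n$-element set $N$, the same cycle type (one $r$-cycle plus $p$-cycles), and the same orbit-propagation scheme requiring a stabilizer-invariant $m$-subset of every large $L$. Your dichotomy $p\nmid d$ versus $p\mid d$ is just a cleaner repackaging of the paper's two bullet cases (full $p$-blocks summing to $m$ via $p\mid m$, versus partial blocks whose points are fixed pointwise by the primality argument), and your explicit steps for placing $N$ inside $y$ via one-types over $S$ and for the mixed sets $L\subseteq S\cup N$ merely spell out what the paper compresses into its condition 2.
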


\begin{proof}
Let $n,m\in\omega\setminus\{0\}$ and let $p$ be a prime divisor of $m$ such that $n\not\equiv 1 \Mod{p}$. Consider the set of the atoms and suppose that there is an infinite subset $A$ with a function $f$ which selects an element from every $n$-element subset of $A$. Let $S$ be a support of $f$. 
Let $M$ be any $\mathrm{T_m}$-model with cardinality $|M|=n$ and write $n=pk+r$ for unique $k,r\in\omega$, with $1< r< p$. Then it is possible to find an $n$-element subset $N=\{x_1,\dots,x_n\}$ of $C$ such that:
\begin{enumerate}
    \item $N$ and $M$ are isomorphic as models of $T$;
    \item $\mathrm{Sel}(Z)$ can be arbitrarily fixed whenever $Z\subseteq S\cup N$ with $\lvert Z\cap S\rvert\geq 1$ and $\lvert Z\cap N\rvert\geq 1$;
    \item $\mathrm{Sel}(\{x_{im+1},\dots,x_n\})=\{x_{im+1},\dots,
    x_{(i+1)m}\}$ holds for all $i< k$.
\end{enumerate} 
Notice that that condition 3 is only a matter of reordering. Consider the following permutation of $N$, written as finite product of finite cycles. $$\widetilde{\pi}=(x_{pk+1},x_{pk+2},\dots,x_n)\prod_{i=0}^{k-1}(x_{pi+1},x_{pi+2},\dots,x_{p(i+1)})$$
Our conclusion will follow by showing that there is a model $M$ of $\mathrm{T}_m$, a corresponding subset $N\subseteq C$ and a permutation $\pi\in\mathrm{fix_G}(S)$ such that $\pi$ acts on $N$ exactly as $\widetilde{\pi}$ acts on $M$. Notice that every cycle in the definition of $\widetilde{\pi}$ is non trivial if and only if $r\neq 1$. First we want to find a $\mathrm{T_m}$-model $M=\{x_1,\dots, x_n\}$ such that $M$ and $\widetilde{\pi}M$ are isomorphic as $\mathrm{T_m}$-models. Naturally we first impose condition 3, namely that for all $i<k$
$$
\mathrm{Sel}(\{x_{im+1},x_{im+2},\dots,x_n\})=\{x_{im+1},x_{im+2},\dots,x_{(i+1)m}\}.
$$
The main idea of the proof is the following: Let $L$ be a subset of $M$ with $|L|>m$, $L\neq\{x_{im+1},x_{im+2},\dots,x_n\}$ for every $i< k$. Consider the orbit $\{\widetilde{\pi}^lL:l\in\omega\}$. Now we will choose an $m$-element subset $L^\prime\subseteq L$ and define $\mathrm{Sel}(L):=L^\prime$. Extend this choice to the whole orbit by defining 
$$
\mathrm{Sel}(\widetilde{\pi}^lL)=\widetilde{\pi}^l(\mathrm{Sel}(L)).
$$ 
The choice of $\mathrm{Sel}(L)$ has to be suitable in the sense that $\widetilde{\pi}^jL=L$ must imply $\widetilde{\pi}^j(\mathrm{Sel}(L))=\mathrm{Sel}(L)$. 
\begin{itemize}
    \item First of all assume that for some $I\subseteq k$, 
    $$
    |L\cap(\bigcup_{i\in I}\{x_{pi+1},\dots,x_{p(i+1)}\})|\in\{m,m-|L\cap\{x_{pk+1},\dots,x_n\}|\}.
    $$
     Then a suitable choice for $\mathrm{Sel}(L)$ is given by either $L\cap(\bigcup_{i\in I}\{x_{pi+1},\dots,x_{p(i+1)}\})$ or by $L\cap(\bigcup_{i\in I}\{x_{pi+1},\dots,x_{p(i+1)}\}\cup\{x_{pk+1},\dots,x_n\})$.
    \item Otherwise, let $J\subseteq k$ be the set of indices $j$ such that $0<|L\cap\{x_{jp+1},\dots,x_{(j+1)p}\}|\neq p$. Moreover, replace $J$ by $J\cup\{k\}$ if $|L\cap\{x_{kp+1},\dots,x_{n}\}|$ either is $1$ or does not divide $r$. For the sake of notation, let us write $\{x_{kp+1},\dots,x_{(k+1)p}\}$ instead of $\{x_{kp+1},\dots,x_n\}$. If $|L\setminus\bigcup_{j\in J}\{x_{jp+1},\dots,x_{(j+1)p}\}|\leq m$, then we claim that a suitable choice for $\mathrm{Sel}(L)$ is given by any $m$-subset of $L$ which includes $L\setminus\bigcup_{j\in J}\{x_{jp+1},\dots,x_{(j+1)p}\}$. The claim follows by the fact that, given a set $\{y_1,\dots,y_{p'}\}$ for some prime $p'\in\omega$, if $\tau$ is the permutation $(y_1,\dots,y_{p'})$ and some power $\tau^a$ fixes a proper subset $H\subsetneq\{y_1,\dots,y_{p'}\}$, then $\tau^a$ is the identity on $\{y_1,\dots,y_{p'}\}$.
\end{itemize}
Note that we covered every possible case. Indeed, if we are not in the last case, then for some $k',r'\in\omega$ with $r'\leq r$ it is true that $m< k'p+r'$. Then, since $r< p$ and $p\mid m$, we are  actually in the first case. 

Now we can show that $S$ is not a support of the selection function $f$ we chose at the beginning of the proof. Let $M$ be the $\mathrm{T_m}$-model we constructed above that satisfies $\widetilde{\pi}M=M$. Let $N\subseteq \omega$ be a $\mathrm{T_m}$-model that is isomorphic to $M$ and satisfies conditions 1,2 and 3. With the proof above and condition 2 we can choose $N$ such that $\pi(\mathrm{Sel}(L))=\mathrm{Sel}(\pi(L))$ for all $L\subseteq N\cup S$. So $\pi$ can be extended to a function $\pi\in \mathrm{fix_G}(S)$ on the whole model $\mathbf{F}$. Note that for all $n$-element subsets of $N$ we have that $\pi(N)\not = N$. So $S$ is indeed not a support of the selection function $f$. This is a contradiction.
\end{proof}
With the same arguments it is possible to emulate the previous result in the following way.
\begin{theorem}
Let $m\in\omega$ be greater than $2$. Then for all $1< n< m$, $\mathrm{MOD_m}\not\models\mathrm{RC_n}$.
\end{theorem}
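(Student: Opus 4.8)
The plan is to adapt the preceding theorem, which handled the case $n \not\equiv 1 \pmod p$ for a prime $p \mid m$, to the restricted range $1 < n < m$. The overarching strategy is identical: suppose for contradiction that on the set of atoms there is an infinite subset $A$ carrying a choice function $f$ on $[A]^n$, and let $S$ be a support of $f$. I would then construct a $\mathrm{T_m}$-model $N \subseteq \omega$ of cardinality $n$ together with a permutation $\pi \in \mathrm{fix_G}(S)$ fixing $N$ setwise but acting on $N$ without fixing any $n$-element subset---which is automatic here since $\pi$ moves $N$ cyclically enough that $\pi(N)=N$ as a set while no choice of one element survives. The contradiction arises exactly as before: $\pi \in \mathrm{Sym}_G(f)$ would force $f$ to commute with $\pi$, but $\pi$ has no fixed point in its action selecting a single element, so $S$ cannot support $f$.

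The key structural point is the construction of the permutation $\widetilde\pi$ on the abstract model $M = \{x_1,\dots,x_n\}$. Since now $1 < n < m$, the whole of $M$ is a single block of size smaller than $m$, so the natural choice is a single nontrivial cycle $\widetilde\pi = (x_1, x_2, \dots, x_n)$, or more cautiously a cycle decomposition respecting the prime structure of $m$ as in the previous proof. Because $n < m$, the function $\mathrm{Sel}$ is never forced to act on $M$ itself (it only acts on sets of size $> m$), so condition 3 of the previous proof becomes vacuous and we have complete freedom in defining $\mathrm{Sel}$ on subsets of $N$. This is precisely where the hypothesis $n < m$ simplifies matters: the entire delicate orbit-by-orbit analysis of how $\mathrm{Sel}(L)$ must be chosen for $|L| > m$ never touches $N$, because every subset of $N$ has size at most $n < m$.

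The main obstacle I expect is the interaction between $N$ and the support $S$, handled by condition 2: we must be able to fix $\mathrm{Sel}(Z)$ arbitrarily for $Z \subseteq S \cup N$ meeting both $S$ and $N$, so that $\pi$, which fixes $S$ pointwise and permutes $N$, genuinely extends to an element of $G$. I would verify, exactly as in the previous theorem, that the Fra\"iss\'e construction of $\mathbf F$ gives infinitely many isomorphic copies of any finite $\mathrm{T_m}$-model, so that a suitable $N \subseteq \omega$ realizing $M$ and satisfying conditions 1 and 2 exists; the freedom in choosing $\mathrm{Sel}$ on mixed sets $Z$ comes from the controlled step of the induction in which new atoms are added. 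Once $N$ and $\widetilde\pi$ are in place and $\widetilde\pi M = M$ is checked as an isomorphism of $\mathrm{T_m}$-models, the permutation $\pi$ copying $\widetilde\pi$ onto $N$ lies in $\mathrm{fix_G}(S)$, and since $\widetilde\pi$ fixes no single element of $M$ (it is a product of nontrivial cycles covering all of $M$), $\pi$ fixes no element of $N$ while fixing $N$ setwise. Applying $f(N) = a$ and $\pi(f(N)) = f(\pi N) = f(N) = a$ forces $\pi(a) = a$, the desired contradiction.
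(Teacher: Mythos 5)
Your proposal is correct and takes essentially the same route as the paper, whose proof is the same reduction to the previous theorem: take the single cycle $\widetilde{\pi}=(x_1,\dots,x_n)$ and exploit that $n<m$ keeps $\mathrm{Sel}$ from ever acting inside $N$. The only detail the paper makes explicit where you appeal to condition 2 is the concrete equivariant choice on mixed sets: impose $\mathrm{Sel}(L)\supseteq L\cap N$ whenever $L\cap N\neq\emptyset$ and $L\subseteq S\cup N$ (possible since $\lvert L\cap N\rvert\leq n<m$), which makes the extension of $\widetilde{\pi}$ to a permutation in $\mathrm{fix_G}(S)$ immediate.
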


\begin{proof}
Exactly as in the previous theorem: just consider the permutation 
$$
\widetilde{\pi}=(x_1,\dots,x_n)
$$
and impose that $\mathrm{Sel}(L)\supset L\cap N$ whenever $L\cap N\neq\emptyset$, with $L\subseteq S\cup N$.
\end{proof}

As an immediate consequence of the last results, we get the following Corollary:
\begin{corollary}
Let $k\in\omega\setminus\{0\}$, $\{p_1,\dots,p_k\}$ be distinct prime numbers and $n=\prod_{i=1}^{k}p_i$. Then\/ $\mathrm{nRC_{fin}}\Rightarrow\mathrm{RC_m}$ if and only if\/ $m\equiv 1\Mod{n}$.
\end{corollary}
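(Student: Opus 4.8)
The final statement is the Corollary asserting that for $n=\prod_{i=1}^k p_i$ (a product of distinct primes), we have $\mathrm{nRC_{fin}}\Rightarrow\mathrm{RC_m}$ if and only if $m\equiv 1\Mod{n}$. My plan is to deduce this directly from the two negative theorems about $\mathrm{MOD_m}$ proved immediately above, together with the positive result of Lemma~\ref{lem:9.1}, by analyzing the two directions of the biconditional separately and exploiting the fact that a product of distinct primes is squarefree.

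\textbf{The easy direction ($\Leftarrow$).} First I would establish that $m\equiv 1\Mod n$ implies $\mathrm{nRC_{fin}}\Rightarrow\mathrm{RC_m}$. Since $n=\prod_{i=1}^k p_i$, the congruence $m\equiv 1\Mod n$ means $m=qn+1$ for some $q\in\omega$, i.e.\ $m=(qp_2\cdots p_k)\,p_1+1$, so in particular $m=cn+1$ for some $c\in\omega$. This is precisely the hypothesis of Lemma~\ref{lem:9.1}, which gives $\mathrm{nRC_{fin}}\Rightarrow\mathrm{RC}_{cn+1}=\mathrm{RC}_m$. So this direction is an immediate instance of the already-proved positive lemma.

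\textbf{The hard direction ($\Rightarrow$), by contraposition.} For the converse I would assume $m\not\equiv 1\Mod n$ and produce a model separating $\mathrm{nRC_{fin}}$ from $\mathrm{RC_m}$; the natural candidate is $\mathrm{MOD_n}$ itself. By Theorem~\ref{thm:7.2} we have $\mathrm{MOD_n}\models\mathrm{nRC_{fin}}$, so it suffices to show $\mathrm{MOD_n}\not\models\mathrm{RC_m}$. Here I invoke the penultimate negative theorem: $\mathrm{MOD_n}\not\models\mathrm{RC_m}$ whenever there is a prime $p\mid n$ with $m\not\equiv 1\Mod p$. The key number-theoretic step is therefore to show that $m\not\equiv 1\Mod n$ forces the existence of such a prime divisor of $n$. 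This is exactly where squarefreeness of $n$ is essential: since $n=p_1\cdots p_k$ is a product of \emph{distinct} primes, the Chinese Remainder Theorem gives a ring isomorphism $\mathbb{Z}/n\mathbb{Z}\cong\prod_{i=1}^k\mathbb{Z}/p_i\mathbb{Z}$, under which $m\equiv 1\Mod n$ holds if and only if $m\equiv 1\Mod{p_i}$ for every $i$. Contrapositively, $m\not\equiv 1\Mod n$ yields some index $i$ with $m\not\equiv 1\Mod{p_i}$, and this $p_i$ is the prime divisor of $n$ demanded by the negative theorem.

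\textbf{Where the obstacle lies and how to finish.} The main delicacy is not the set theory, which is fully outsourced to Theorem~\ref{thm:7.2} and the two negative theorems, but rather making sure the number-theoretic translation is airtight and that the hypothesis $1<n$ (needed so that $\mathrm{MOD_n}$ and its atoms are nontrivial, and so that the primes $p_i$ exist) is in force; this is guaranteed by $k\geq 1$. I should also treat any edge value of $m$ carefully: if $1<m<n$ then $m\not\equiv 1\Mod n$ automatically, and the CRT argument still supplies a prime $p_i\mid n$ with $m\not\equiv 1\Mod{p_i}$, so the penultimate theorem applies (the very last negative theorem, covering $1<m<n$ directly, serves as an alternative or reinforcing tool here). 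Assembling the pieces, from $m\not\equiv 1\Mod n$ I obtain $\mathrm{MOD_n}\models\mathrm{nRC_{fin}}\wedge\lnot\mathrm{RC_m}$, hence $\mathrm{nRC_{fin}}\not\Rightarrow\mathrm{RC_m}$, which is the contrapositive of the forward direction and completes the biconditional.
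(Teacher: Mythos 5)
Your proposal is correct and is essentially the paper's intended argument: the paper states the corollary as an immediate consequence of Lemma~\ref{lem:9.1} (giving the direction $m\equiv 1\Mod{n}\Rightarrow$ implication) together with Theorem~\ref{thm:7.2} and the negative theorems about $\mathrm{MOD_n}$ (giving the converse via the model $\mathrm{MOD_n}\models\mathrm{nRC_{fin}}\wedge\lnot\mathrm{RC_m}$, with squarefreeness of $n$ supplying, exactly as in your CRT step, a prime $p_i\mid n$ with $m\not\equiv 1\Mod{p_i}$). Your only implicit step, passing from the permutation-model separation to non-implication in $\mathrm{ZF}$, is the same one the paper makes, justified by the injective boundability discussion in Section~3.
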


\section{Open Questions}

\begin{itemize}
	\item For $n\in\{2,3,4,6\}$ we have that $\mathrm{nRC_{fin}}\Rightarrow \mathrm{nC_{fin}^-}$. Does this implication hold for $n=5$? Or more generally: For which $n\in\omega$ does this implication hold?
	\item Write a natural number as unique product of powers of primes $n=\prod_{i=1}^{k}p_i^{m_i}$. Is it the case that\/ $\mathrm{nRC_{fin}}\Rightarrow\mathrm{RC_m}$ if and only if\/ $m>n$ and $m\equiv1\Mod{\prod_{i=1}^{k}p_i}$?
\end{itemize}

\end{document}